\documentclass[12pt,reqno]{amsart}  

\usepackage{amsmath,amsthm,amssymb,amscd,mathdots,mathtools,enumerate,float,shuffle,stmaryrd}

\usepackage{hyperref}
\usepackage{tikz,tikz-cd}
\allowdisplaybreaks
\usetikzlibrary{decorations.pathmorphing,shapes,calc}

\usepackage{indentfirst}

\mathtoolsset{showonlyrefs}
%%%%%%%%%%%%%%
\setlength{\topmargin}{-60pt}
\setlength{\headheight}{12truept}
\setlength{\headsep}{25pt}
\setlength{\footskip}{37pt}
\setlength{\hoffset}{1mm}
\setlength{\voffset}{34pt}
\setlength{\oddsidemargin}{-5mm}
\setlength{\evensidemargin}{-5mm}
\setlength{\textheight}{215mm}
\setlength{\textwidth}{165mm}
%%%%%%%%%%%%%%%%%%
\usepackage{fancyhdr}
\pagestyle{fancy}

%%%%%%%%%%%%%%%%%%%%
\newtheorem{theorem}{Theorem}[section]
\newtheorem*{theorem*}{Theorem}

\newtheorem{lemma}[theorem]{Lemma}

\numberwithin{equation}{section}

\theoremstyle{definition}

%%%%%%%%%%%%%%%%%%%%%%%%

\parindent 0.0em

%%%%%%%
\newcommand{\Z}{\mathbb{Z}}

%\mathtoolsset{showonlyrefs} % just show used refs

\DeclareMathOperator{\arcsinh}{arcsinh}

% moulds & bimoulds 

 % mould rat sol dsh
 % bimould rat sol dsh

\definecolor{mycolor}{RGB}{194, 8, 88}
\newcommand{\todo}[1]{\message{LaTeX Warning: You did not finish your work :-( on input line \the\inputlineno} {\color{mycolor} {\big[\,}{\bf Todo:} #1\,\big]}}

\setlength{\parindent}{2em}

\makeatletter
\newcommand{\cop}{\DOTSB\cop@\slimits@}
\newcommand{\cop@}{\mathop{\bigstar}}
\newcommand{\cop@@}[2]{%
  \vphantom{\sum}%
  \ifx#1\displaystyle\big#2\else#2\fi
}
\makeatother

\title{The mean square of the product of the Riemann zeta-function and a Dirichlet polynomial in the critical strip}
\author{Yu Jinbo}
%%%%%%%%%%%%%%%%%%%%%%%%%%%%%%%%%%
\fancyhead{} % 初始化页眉
\fancyhead[CO]{\tiny\textsl{\rightmark}}
\fancyhead[CE]{\tiny\textsl{\leftmark}}
%%%%%%%%%%%%%%%%%%%%%%%%%%%%%%%%%%

\subjclass[2020]{
11M06, %$\zeta (s)$ and $L(s, \chi)$
11M41
}

\keywords{Riemann zeta-function, Dirichlet polynomial, Atkinson formula}
\address{Graduate School of Mathematics, Nagoya University, Nagoya, Japan.}
\email{jinbo.yu.e6@math.nagoya-u.ac.jp}

\begin{document}
\date{\today}
\maketitle
%\tableofcontents
\begin{abstract} 
We refine a previous work of K. Matsumoto and H. Ishikawa \cite{IM}, obtaining an asymptotic formula for the mean square of the product of the Riemann zeta-function and a Dirichlet polynomial in the critical strip ($1/4<\sigma<1/2$), by obtaining an explicit formula of Atkinson type for its error term. This work is closely related to the generalized Dirichlet divisor problem. The form of these formulas is akin to, yet more complex than, Voronoi's formulas in the divisor problem.
\end{abstract}

\section{Introduction}
Let $s=\sigma+it$ be a complex variable, $\zeta(s)$ be the Riemann zeta-function, and $$A(s)=\sum_{m=1}^M a(m)m^{-s}$$be a Dirichlet polynomial, where $1\le M\in \Z$, $a(m)\in\mathbb{C}$, with $a(m)=O(m^{\epsilon})$ for $\varepsilon>0$.

In number theory, a significant area of study is the asymptotic behavior of certain mean values. In particular, for $T\geq 2$ the asymptotics for the mean value
\begin{align}
     I\left(T,A;\frac{1}{2}\right)=\int_0^T|\zeta(1/2+it)A(1/2+it)|^2dt\ %\quad (T \geq 2)
    \label{1.1}
\end{align}
are of great interest. These have been used to understand the distribution of the values of $\zeta(s)$ and the location of its zeros \cite{S}.
It is crucial to allow $M$ to be as large as possible. If $M=T^{1-\varepsilon}$ could be taken in \eqref{1.1} to obtain a sufficiently sharp asymptotic formula, it would imply the Lindel\"of Hypothesis. 

Balasubramanian, Conrey and Heath-Brown \cite{BCH} first obtained an asymptotic formula for $I\left(T,A;\frac{1}{2}\right)$. Under the condition $\log M\ll \log T$ they show that
\begin{align*}
     I\left(T,A;\frac{1}{2}\right)=\sum_{k\le M}\sum_{l\le M}\frac{a(k)\overline{a(l)}}{[k,l]}\left(\log\frac{(k,l)^2T}{2\pi kl}+2\gamma-1\right)T +E\left(T,A;\frac{1}{2}\right),
\end{align*}
where $\overline{a(l)}$ is the complex conjugate of $a(l)$, $(k,l)$ means the greatest common divisor of $k$ and $l$, $[k,l]$ means the least common multiple and $\gamma$ is Euler's constant. The error estimate is $$E\left(T,A;\frac{1}{2}\right)\ll M^2T^{\varepsilon}+T\log^{-c}T$$ for any $c>0$. Their motivation was to understand the location of the zeros of the Riemann zeta-function. Specifically, they deduced that at least 38$\%$ of the complex zeros of $\zeta(s)$ are on the critical line $\Re(s)=\frac{1}{2}$ by selecting the special form of $a(m)$. And if their conjecture about the error estimate holds (a better result), then this number can be raised to 55$\%$.

Ishikawa and Matsumoto \cite{IM} prove an analogue of Atkinson's formula for $E\left(T,A;\frac{1}{2}\right)$. Atkinson \cite{A} was the first to give an explicit formula for $E(T)$ defined by 
$$E(T)= \int_0^T\left|\zeta\left(\frac{1}{2}+it\right)\right|dt-T\log T-(2\gamma-1-\log 2\pi)T.$$
It is very useful in the mean square theory of $\zeta(s)$. Naturally, it also applies to other zeta-functions, L-functions, and other functions that are related to zeta-functions. Matsumoto \cite{M} applied Atkinson's idea to study the analog of $E(T)$ in the critical strip($1/2<\sigma<3/4$), reinforced the connection between the mean square of Riemann zeta-function and the Voronoi-type formula for the divisor function. Ishikawa \cite{I3} introduced a new idea to prove an explicit formula of Atkinson type for Dirichlet L-function $L(s;\chi)$ associated with a complex character $\chi$. The fundamental idea, inspired by Hafner and Ivi\'{c} \cite{I1}, is to consider the integral over the interval $[T,2T]$ instead of $[-T,T]$. This is to avoid the integral over the interval $[-T, T]$, which does not necessarily equal half of the integral over $[0, T]$. For example, in the $\zeta(s)$ case, $$\int_0^T\left|\zeta\left(\frac{1}{2}+it\right)\right|dt=\frac{1}{2}\int_{-T}^T\left|\zeta\left(\frac{1}{2}+it\right)\right|dt,$$
but in our case, $I(T,A;1/2)$ is not necessarily equal to $$\frac{1}{2}\int_{-T}^T\left|\zeta\left(\frac{1}{2}+it\right)A\left(\frac{1}{2}+it\right)\right|^2dt.$$

In the present paper, we try to understand the asymptotic formula of 
$$I(T,A;\sigma) = \int_0^T|\zeta(\sigma+it)A(\sigma+it)|^2dt\ \ for\ 2\le T$$
on the critical strip $(1/4<\sigma<1/2)$ which has not been discussed yet. Our argument is inspired by Ishikawa and Matsumoto \cite{IM}, but further, we apply the idea from Ishikawa \cite{I3} to $E(T,A;\sigma)$ to obtain an analogue of Atkinson's formula. And reinforced the connection between the mean square of the Riemann zeta-function and the Voronoi-type formula for a generalization of the divisor function like Matsumoto \cite{M}.

First, we introduce some notation, $k$ and $l$ are positive integers and $T,u>0$: 
$$\kappa=\frac{k}{(k,l)},\ \ \ \lambda=\frac{l}{(k,l)},\ \ \ e(x)=\exp(2\pi i x)$$$$\arcsinh x=\log(x+\sqrt{x^2+1}),\ \ \xi(T,u)=\frac{T}{2\pi}+\frac{u}{2}-\sqrt{\frac{u^2}{4}+\frac{uT}{2\pi}}$$
$$f(T,u)=2T\arcsinh\sqrt{\frac{\pi u}{2T}}+\sqrt{2\pi u T-\pi^2u^2}-\frac{\pi}{4},\ \ g(T,u)=T\log\frac{T}{2\pi u}-T+2\pi u+\frac{\pi}{4}$$
Let us define
$$\mathcal{M}(T,A)=\sum_{k\le M}\sum_{l\le M}\frac{a(k)\overline{a(l)}}{[k,l]^{2\sigma}}\left[\zeta(2\sigma)T+\Gamma(2\sigma-1)\zeta(2\sigma-1)[k,l]^{2\sigma-1}\frac{\cos((\sigma-1/2)\pi)}{1-\sigma}T^{2-2\sigma}\right],$$

$$\Sigma_1(T,Y)=\sum_{k,l \le M}\sum_{n\le \kappa\lambda Y}\Im\left\{\frac{a(k)\bar{a(l)}}{[k,l]^{2\sigma}}(\kappa\lambda)^{\sigma}\exp(-2i\pi\sigma)\sigma_{2\sigma-1}(n)n^{-\sigma}e(\bar{\kappa}n/\lambda)T^{1/2-\sigma}\right.$$
$$\left.\times \left(\arcsinh\sqrt{\frac{\pi n}{2T\kappa\lambda}}\right)^{-1}(1+2T\kappa\lambda/\pi n)^{-1/4}\exp(i(f(T,n/\kappa\lambda)-\pi n/\kappa\lambda+\pi/2))\right\}.$$
where $\sigma_{a}(n)$ is the sum of $a$-powers of positive divisors of $n$, $\bar{\kappa}$  is defined by $\kappa\bar{\kappa}\equiv1(\mod\lambda)$, and
$$\Sigma_2(T,Y)=-\sum_{k,l \le M}\sum_{n\le (\lambda/\kappa) Y}\frac{T^{1/2-\sigma}}{\pi^{1/2+\sigma}2^{\sigma-1/2}}\Re\left\{\frac{a(k)\bar{a(l)}}{[k,l]^{2\sigma}}\kappa^{\sigma}\lambda^{\sigma}\frac{\sigma_{2\sigma-1}(n)e(-\kappa n/\lambda)}{n^{\sigma}}\frac{4\pi \exp\left\{ig(T,\kappa n/\lambda)\right\}}{\log(\lambda T/2\pi \kappa n)}\right\}.$$

\begin{theorem}\label{thm1}
    Let $T,Y > 0$ with $C_1T<Y<C_2T$ and $T\ge C^{\star}=\max\{e,\ C_1^{-1}\}$, where $C_1$ and $C_2$ are fixed constants with $0<C_1<C_2$. Then for $1/4<\sigma<1/2$ we have
    $$\int_T^{2T} |\zeta(\sigma+it)A(\sigma+it)|^2dt=\mathcal{M}(2T,A)+\Sigma_1(2T,2Y)+\Sigma_2(2T,\xi(2T,2Y))$$
$$-\mathcal{M}(T,A)-\Sigma_1(T,Y)-\Sigma_2(T,\xi(T,Y))+R(T,2T,A),$$
where $R(T,2T,A)$ is the error term satisfying
$$R(T,2T,A)\ll_{M} T^{1-2\sigma}\log(T).$$
\end{theorem}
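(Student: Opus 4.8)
The plan is to follow the Atkinson-type strategy developed by Ishikawa and Matsumoto \cite{IM}, but carried out at the level of the interval $[T,2T]$ as in Ishikawa \cite{I3}. Expanding the product $|\zeta(\sigma+it)A(\sigma+it)|^2$, the mean square becomes
\[
\sum_{k\le M}\sum_{l\le M}a(k)\overline{a(l)}\int_T^{2T}\zeta(\sigma+it)\zeta(\sigma-it)k^{-\sigma-it}l^{-\sigma+it}\,dt,
\]
so it suffices to understand each diagonal-type integral. First I would write $\zeta(\sigma+it)\zeta(\sigma-it)$ via the analytic continuation of a double Dirichlet series (an Estermann-type / $\sigma_{a}(n)$-weighted series), exactly as in Atkinson's original treatment and its generalization by Matsumoto \cite{M} to the strip $1/2<\sigma<3/4$; here the relevant divisor function is $\sigma_{2\sigma-1}(n)$, which is why it appears throughout $\Sigma_1$ and $\Sigma_2$. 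The rational shifts $k^{-it}l^{it}=(\kappa/\lambda)^{-it}$ contribute the arithmetic factors $e(\bar\kappa n/\lambda)$ and $e(-\kappa n/\lambda)$ through a standard computation with the reciprocity of $\kappa$ modulo $\lambda$.

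Next I would split the resulting expression into a "main term" piece and two "off-diagonal" series according to the two ways of closing contours / applying the functional equation, producing the two sums that after stationary-phase analysis become $\Sigma_1$ and $\Sigma_2$. Concretely: one family of terms, after shifting contours, yields the polar contributions collected in $\mathcal M(T,A)$ (the $\zeta(2\sigma)T$ term from the pole of $\zeta$ at $s=1$ in one variable and the $\Gamma(2\sigma-1)\zeta(2\sigma-1)$ term from the reflected pole, with the $\cos((\sigma-1/2)\pi)/(1-\sigma)\cdot T^{2-2\sigma}$ coming from integrating $t^{1-2\sigma}$ over $[T,2T]$). The remaining oscillatory integrals are handled by the saddle-point method: the phases $f(T,u)$ and $g(T,u)$ are precisely the stationary-phase outputs of the two types of exponential integrals (the $\arcsinh$ phase $f$ arising from the "convergent side" of the functional equation, the logarithmic phase $g$ from the other), and $\xi(T,u)$ is the location where the relevant saddle passes out of the summation range, which is why $\Sigma_2$ is evaluated at $\xi(T,Y)$ rather than at $Y$. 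The truncation levels $\kappa\lambda Y$ and $(\lambda/\kappa)Y$ with $Y\asymp T$ come from requiring the saddle point to lie in $[T,2T]$.

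The error term $R(T,2T,A)$ then collects: (i) the tails of the two series beyond their natural truncation points; (ii) the error in each application of the saddle-point lemma (a first-derivative-test / second-derivative-test estimate, with the weight $T^{1/2-\sigma}$ reflecting the size of $|\zeta(\sigma+it)|^2$-type contributions in this strip); (iii) the error in the contour shifts and in replacing the continued Dirichlet series by its residues plus a controlled remainder integral. Each of these is $O(T^{1-2\sigma}\log T)$ after summing over $k,l\le M$, the $\log T$ coming from the harmonic-type sum $\sum_{n}\sigma_{2\sigma-1}(n)n^{-\sigma}\cdot(\text{length factor})$ and from the $1/\log(\lambda T/2\pi\kappa n)$ in $\Sigma_2$ near the endpoint of its range; the implied constant depends on $M$ through the double sum and through $\max_m|a(m)|\ll M^{\varepsilon}$. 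The crux of the argument—and the main obstacle—is the stationary-phase analysis in the regime $1/4<\sigma<1/2$: because $\sigma<1/2$ the relevant exponents $t^{1/2-\sigma}$ and $t^{1-2\sigma}$ are genuinely growing, so one must track the $u$- and $t$-dependence of the amplitude and phase through the saddle-point lemma carefully enough to land on the clean closed forms $\Sigma_1,\Sigma_2$ with error only $T^{1-2\sigma}\log T$; in particular one needs uniform control of the error in the saddle-point expansion as the saddle migrates across $[T,2T]$ and as $n$ ranges up to its truncation, together with a careful treatment of the terms where the saddle is near an endpoint, which is exactly the role played by the auxiliary quantity $\xi(T,u)$.
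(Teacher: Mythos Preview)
Your outline captures the Atkinson strategy at a coarse level---the diagonal/off-diagonal decomposition, the emergence of $\sigma_{2\sigma-1}(n)$ and of the twists $e(\bar\kappa n/\lambda)$, the $[T,2T]$ device of Ishikawa, and the saddle-point origin of $\Sigma_1$ with phase $f(T,u)$. But the description of how $\Sigma_2$ appears is not correct, and this hides the main work of the proof.

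You present $\Sigma_1$ and $\Sigma_2$ as parallel outputs of ``two ways of closing contours / applying the functional equation''. In the paper they are not parallel at all. After continuing $g(u,2\sigma-u;A)$ one uses Kiuchi's asymptotic for the twisted divisor sum $D_{2\sigma-1}(x,\bar\kappa/\lambda)$ to split the relevant series into pieces $g_1,g_2,g_3,g_4$; the sum $\Sigma_1$ comes from $g_1$ via Atkinson's second (saddle-point) lemma, as you say. The sum $\Sigma_2$, however, comes from $g_4$, which is an integral against the \emph{error term} $\Delta_{2\sigma-1}(x,\bar\kappa/\lambda)$ of that divisor problem. To extract $\Sigma_2$ one must substitute a Voronoi-type expansion for $\Delta_{2\sigma-1}(x,\bar\kappa/\lambda)$ (the Oppenheim/Roy formula with Bessel functions $J_{1+a},Y_{1+a},K_{1+a}$) and then apply Atkinson's \emph{third} lemma; the phase $g(T,u)$ and the cutoff $\xi(T,Y)$ are produced by that lemma, not by a direct contour shift. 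Your proposal never mentions $\Delta_{2\sigma-1}$, the Voronoi input, or Atkinson's third lemma, so as written it cannot reach $\Sigma_2$.

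There is a second missing ingredient. After the saddle-point step the $g_1$-contribution still contains a block $I_{12}$ (the range $X<n\le 2X$ at the upper endpoint $2T$), and the $g_4$-contribution still contains a block $\sum_{k,l}J_2(k,l)$ (the range $X\le x\le 2X$). Neither of these is individually $O_M(T^{1-2\sigma}\log T)$; the proof works because they \emph{cancel}. Establishing that cancellation---by rewriting $I_{12}$ as an integral $\int_{\sigma-2iT}^{\sigma+2iT}h(u,n/\kappa\lambda)\,du$, applying the same partial-summation/$\Delta_{2\sigma-1}$ decomposition, and matching the leading piece against $J_2$---is precisely the Ishikawa \cite{I3} mechanism you allude to but do not implement. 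The residual pieces $R_1,\dots,R_5$ are then bounded one by one (using, among other things, Kiuchi's mean-square bound for $\Delta_{2\sigma-1}$ and Heath-Brown's first-derivative lemma), and it is the $M_3$-term from Atkinson's third lemma that finally yields the dominant $T^{1-2\sigma}\log T$. Your error discussion (``tails, saddle-point errors, contour-shift errors'') does not account for either the cancellation or this specific source of the $\log T$.
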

\begin{theorem}\label{thm2}
    Under the same assumptions as in Theorem\ref{thm1}, we have
    \begin{align*}
        E(T,A;\sigma)=I(T,A;\sigma)-\mathcal{M}(T,A)=\Sigma_1(T,Y)+\Sigma_2(T,\xi(T,Y))+R(T,A)
    \end{align*}
    with 
    \begin{align*}
        R(T,A)\ll _M\log^{(2-2\sigma)\alpha}(T)+\log^{(3/2-2\sigma+\varepsilon)\alpha}(T)+\log^{\alpha}(T)\log T+ T^{1-2\sigma}\log^2 T
    \end{align*}
    for any $\alpha>0$.
\end{theorem}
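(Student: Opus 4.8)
The plan is to deduce Theorem~\ref{thm2} from the short-interval formula of Theorem~\ref{thm1} by a dyadic decomposition of $[0,T]$ together with a telescoping of the right-hand sides; this is the device that replaces the evenness trick ($\int_{-T}^{T}=2\int_0^{T}$) which is unavailable here since $|A(\sigma+it)|^2$ is not even in $t$. Fix $T\ge C^\star$ and $Y$ with $C_1T<Y<C_2T$, and set $T_j=T2^{-j}$, $Y_j=Y2^{-j}$ for $0\le j\le J:=\lfloor\log_2(T/C^\star)\rfloor$, so that $T_J\in[C^\star,2C^\star)$ while $T_j\ge C^\star$ and $C_1T_j<Y_j<C_2T_j$ for every $j$. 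Since $2T_{j+1}=T_j$, $2Y_{j+1}=Y_j$ and $\xi(2T_{j+1},2Y_{j+1})=\xi(T_j,Y_j)$, writing $\Phi(U,V):=\mathcal{M}(U,A)+\Sigma_1(U,V)+\Sigma_2\bigl(U,\xi(U,V)\bigr)$ and applying Theorem~\ref{thm1} on $[T_{j+1},T_j]=[T_{j+1},2T_{j+1}]$ with parameter $Y_{j+1}$ gives
\begin{align*}
\int_{T_{j+1}}^{T_j}|\zeta(\sigma+it)A(\sigma+it)|^2\,dt=\Phi(T_j,Y_j)-\Phi(T_{j+1},Y_{j+1})+R(T_{j+1},T_j,A).
\end{align*}
Summing over $0\le j\le J-1$ the $\Phi$-terms telescope, and using $[0,T]=[0,T_J]\cup\bigcup_{j=0}^{J-1}[T_{j+1},T_j]$ one arrives at exactly $E(T,A;\sigma)=\Sigma_1(T,Y)+\Sigma_2(T,\xi(T,Y))+R(T,A)$ with
\begin{align*}
R(T,A)=\int_0^{T_J}|\zeta(\sigma+it)A(\sigma+it)|^2\,dt-\mathcal{M}(T_J,A)-\Sigma_1(T_J,Y_J)-\Sigma_2\bigl(T_J,\xi(T_J,Y_J)\bigr)+\sum_{j=0}^{J-1}R(T_{j+1},T_j,A).
\end{align*}

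It then remains to estimate the four contributions to $R(T,A)$. Since $1-2\sigma>0$, Theorem~\ref{thm1} gives
\begin{align*}
\sum_{j=0}^{J-1}R(T_{j+1},T_j,A)\ll_M\sum_{j=1}^{J}T_j^{1-2\sigma}\log T_j\ll_M T^{1-2\sigma}\log^2T,
\end{align*}
either by a geometric-series comparison (which in fact yields $T^{1-2\sigma}\log T$) or, crudely, via $J\ll\log T$. The integral $\int_0^{T_J}|\zeta A|^2\,dt$ runs over the bounded range $[0,2C^\star)$, on which $\zeta(\sigma+it)$ is continuous since $\Re s=\sigma<1$, so it is $\ll_M1$; likewise $\mathcal{M}(T_J,A)\ll_M T_J+T_J^{2-2\sigma}\ll_M1$. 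The substantive task is therefore to bound the two bottom-scale Voronoi-type sums $\Sigma_1(T_J,Y_J)$ and $\Sigma_2(T_J,\xi(T_J,Y_J))$, which are finite sums over triples $(k,l,n)$ with $k,l\le M$ and $n\le\kappa\lambda Y_J$, respectively $n\le(\lambda/\kappa)\xi(T_J,Y_J)$.

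To handle the factor $\bigl(\log(\lambda T_J/(2\pi\kappa n))\bigr)^{-1}$ in $\Sigma_2$, note that in the summation range $\kappa n/\lambda\le\xi(T_J,Y_J)$ and that $\xi(T,u)/T=\tfrac1{2\pi}+\tfrac{u}{2T}-\sqrt{\tfrac{u^2}{4T^2}+\tfrac{u}{2\pi T}}<\tfrac1{2\pi}$ for $u,T>0$; being continuous in $u/T\in[C_1,C_2]$ it is bounded there by some $c_0=c_0(C_1,C_2)<\tfrac1{2\pi}$, whence $\lambda T_J/(2\pi\kappa n)\ge 1/(2\pi c_0)>1$ and the logarithm stays bounded away from $0$ throughout. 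With this secured, and using $a(m)\ll m^\varepsilon$ together with $\sigma_{2\sigma-1}(n)\ll n^\varepsilon$ (legitimate since $2\sigma-1<0$) and the elementary sizes of $\bigl(\arcsinh\sqrt{\pi n/(2T_J\kappa\lambda)}\bigr)^{-1}$ and $(1+2T_J\kappa\lambda/\pi n)^{-1/4}$, the two sums can be estimated. \emph{The main obstacle} is to squeeze out the precise exponents $2-2\sigma$ and $3/2-2\sigma+\varepsilon$ (and the accompanying $\log^{\alpha}(T)\log T$ term) rather than a cruder count, which forces one to exploit cancellation in the exponential/divisor sums $\sum_n\sigma_{2\sigma-1}(n)e(\beta n)(\cdots)$ at the fixed height $T_J$. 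Assembling these bounds gives $R(T,A)\ll_M\log^{(2-2\sigma)\alpha}(T)+\log^{(3/2-2\sigma+\varepsilon)\alpha}(T)+\log^{\alpha}(T)\log T+T^{1-2\sigma}\log^2T$, as claimed.
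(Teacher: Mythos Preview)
Your dyadic telescoping of Theorem~\ref{thm1} is exactly the paper's strategy, but you and the paper differ at one point: the choice of the bottom level. The paper does not descend all the way to a bounded $T_J$; it sets
\[
L=\Big\lfloor\frac{\log T-\log C^\star-\alpha\log\log T}{\log 2}\Big\rfloor,
\]
so that $2^{-L}T\asymp\log^\alpha T$. All four $\alpha$-dependent terms in the statement then arise from \emph{trivially} bounding $\mathcal{M}$, $\Sigma_1$, $\Sigma_2$ and $\int_0^{2^{-L}T}|\zeta A|^2$ at height $\asymp\log^\alpha T$ (the last via the classical mean-square bound $\int_0^U|\zeta(\sigma+it)|^2dt\ll U^{2-2\sigma}+U$); no cancellation is used.

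With your choice $T_J\in[C^\star,2C^\star)$ the situation is even simpler: the range of $n$ in $\Sigma_1(T_J,Y_J)$ and in $\Sigma_2(T_J,\xi(T_J,Y_J))$ is bounded by a constant depending only on $M,C_1,C_2$, and every summand is bounded (you have already checked that the factor $\log^{-1}(\lambda T_J/2\pi\kappa n)$ is harmless). Hence both sums are $\ll_M1$, and together with your $O_M(1)$ bounds for $\int_0^{T_J}|\zeta A|^2$ and $\mathcal{M}(T_J,A)$ you obtain directly $R(T,A)\ll_M T^{1-2\sigma}\log^2 T$, which already implies the stated bound for every $\alpha>0$. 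The ``main obstacle'' you flag in your last paragraph therefore does not exist: there is nothing to squeeze out and no exponential-sum cancellation is required. The $\log^{(2-2\sigma)\alpha}$-type terms are not sharper features to be reproduced but artefacts of the paper's higher stopping level; your variant is in fact the cleaner argument.
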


The results give an asymptotic formula for the mean square of the product of the Riemann zeta-function and a Dirichlet polynomial in the left-critical strip and an Atkinson's type explicit formula for $ E(T,A;\sigma)$. 
It is important to note that the discussion in this paper does not apply to the right-critical strip $1/2<\sigma<3/4$. In this paper we use Euler–Maclaurin formula for the product of the Riemann zeta-function and Dirichlet polynomial. We believe that if we use Poisson summation formula as in \cite{A}, we can reach the conclusion in the right-critical strip $1/2<\sigma<3/4$ through a similar discussion. And in the form of the final result, they will show some kinds of similarity. 
%In this article, we use Euler–Maclaurin formula for the product of the Riemann zeta-function and Dirichlet polynomial. The next step in my research program is to discuss the results of the critical strip in the right half $(1/2<\sigma<3/4)$. We will accomplish this by carrying out the Poisson summation formula like Atkinson's original paper \cite{A} did.
%In the next part of the plan, we will discuss the results of the critical strip in the right half $(1/2<\sigma<3/4)$. We think that the ideas in this part will differ considerably from the main tool in this paper, where the treatment of the product function is essentially carried out by means of the Euler–Maclaurin formula, whereas in the right-half case we will accomplish this by carrying out the Poisson summation formula like Atkinson's original paper did. 

%%%%%%%%%%%%%%或许可以放在最后
%And look forward to lifting the $\log M\ll \log T$ restriction by introducing a smooth function $\phi\left(\frac{t}{T}\right)$.
%%%%%%%%%%%%%%%%%%
\section{A generalization of Dirichlet division problem}
Let $\sigma_a(n)$ be the divisor function, and denote 
$$D_{2\sigma-1}(x,\frac{\bar{\kappa}}{\lambda})=\sideset{}{'}\sum_{n\le x}\sigma_{2\sigma-1}(n)e\left(\frac{\bar{\kappa }n}{\lambda}\right),
$$
where the symbol $\sideset{}{'}\sum$ means that the last term is to be halved if $x$ is an integer. 
Kiuchi \cite{K} gave a results on $D_{-a}(n,x)$ for ($0<a<1/2$)
\begin{lemma}
For $0<a<1/2$, we have
\begin{equation}\begin{aligned}
\sideset{}{'}\sum_{n\le x}\sigma_{-a}(n)e\left(\frac{hn}{k}\right)=k^{-a-1}\zeta(1+a)x+k^{1+a}\frac{\zeta(1-a)}{1-a}x^{1-a}
+E_{-a}(0;h/k)+\Delta_{-a}(x;h/k),
\label{eq:D}
\end{aligned}\end{equation}
where, for $-1<a\le0$, $1\ll x$, $1<N\ll x$,$$E_a(s;h/k)=\sum^{\infty}_{n=1}\sigma_a(n)e(hn/k)n^{-s}, \ \ \Re{s}>1$$and with $N=k^{2/(3-2a)}x^{(1-2a)/(3-2a)}$, $k\le x$ we have
\begin{equation}
\Delta_a(x;h/k)\ll k^{(2-2a)/(3-2a)+\varepsilon}x^{1/(3-2a)+\varepsilon}.
\label{O:d}
\end{equation}

\label{lem-1}
\end{lemma}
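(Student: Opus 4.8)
The plan is to realise \eqref{eq:D} as a twisted Voronoi summation formula, obtained by contour integration from the analytic properties of the Estermann-type Dirichlet series attached to the left-hand side,
$$E_{-a}(s;h/k)=\sum_{n\ge 1}\sigma_{-a}(n)e\!\left(\frac{hn}{k}\right)n^{-s},\qquad \Re s>1$$
(the series already named in the statement). First I would establish, starting from the factorisation $\sigma_{-a}(n)e(hn/k)=\sum_{d\ell=n}d^{-a}e(hd\ell/k)$ and the Hurwitz-zeta expansion of the inner sum $\sum_{\ell\ge 1}e(h'\ell/q)\ell^{-s}$, where $q=k/\gcd(d,k)$ and $\gcd(h',q)=1$, that $E_{-a}(s;h/k)$ continues meromorphically to $\C$ with exactly two simple poles, at $s=1$ and at $s=1-a$, and that it satisfies an Estermann-type functional equation expressing $E_{-a}(s;h/k)$ for $\Re s\le 0$ through the values $E_{-a}(1-s;\pm\bar h/k)$, a quotient of $\Gamma$-factors, and a weight that is an explicit power of $2\pi k$; here $\bar h$ denotes an inverse of $h$ modulo $k$.

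Next, by a (truncated) Perron formula one has $\sideset{}{'}\sum_{n\le x}\sigma_{-a}(n)e(hn/k)=\frac{1}{2\pi i}\int_{(c)}E_{-a}(s;h/k)\frac{x^s}{s}\,ds$ for $c>1$, and I would push the contour to the line $\Re s=-\delta$ for a small fixed $\delta>0$. The poles of the integrand at $s=1$ and $s=1-a$ contribute, via the factor $x^s/s$, exactly the two polynomial main terms of \eqref{eq:D}, while the pole of $x^s/s$ at $s=0$ — where $E_{-a}(s;h/k)$ is holomorphic — contributes exactly the constant $E_{-a}(0;h/k)$. The surviving integral $\frac{1}{2\pi i}\int_{(-\delta)}E_{-a}(s;h/k)\frac{x^s}{s}\,ds$ is, by definition, $\Delta_{-a}(x;h/k)$. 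Into this integral I would substitute the functional equation: the integrand becomes a Dirichlet series in a dual variable $n$ carrying $\Gamma$-factor weights, and after justifying the interchange of summation and integration, each summand is a classical Bessel-type integral (in the untwisted case these assemble into the familiar $J$-, $Y$- and $K$-Bessel Voronoi series). Truncating the dual series at height $N$ leaves a finite oscillatory sum plus a tail.

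The bound \eqref{O:d} then follows by estimating these two pieces. For the tail $n>N$ one integrates the Bessel integral by parts once — equivalently, invokes the standard truncated-Voronoi estimate for $\sigma_{-a}$ — which together with $\sigma_{-a}(n)\ll n^{\varepsilon}$ produces a negative power of $N$ times a power of $kx$; for the main range $n\le N$ one bounds the oscillatory sum by a van der Corput-type argument, using that the phase has size $\asymp\sqrt{nx/k}$ with controlled derivatives (this is the step that, in the classical case $a=0$, $k=1$, underlies Voronoi's bound $\Delta(x)\ll x^{1/3}$). Balancing the two contributions forces the choice $N=k^{2/(3-2a)}x^{(1-2a)/(3-2a)}$ (which, for $k\le x$, is automatically $\le x$) and yields $\Delta_{-a}(x;h/k)\ll k^{(2-2a)/(3-2a)+\varepsilon}x^{1/(3-2a)+\varepsilon}$, uniformly for $k\le x$.

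I expect the main obstacle to be the \emph{uniformity in $k$} with the precise exponents: one must carry every power of $k$ — through the functional equation, the Bessel asymptotics, the short-sum estimate and the truncation error — and then check that the optimisation in $N$ really produces the exponent $(2-2a)/(3-2a)$ rather than something larger. A secondary technical nuisance is the only conditional convergence of the dual series (its $J$-Bessel part does not decay in $n$), which forces one to treat the oscillatory sum and its tail jointly rather than term by term; the single integration by parts and the derivative test are exactly what take care of this. Since all of this is carried out in \cite{K}, I would simply cite that paper for the details, Lemma~\ref{lem-1} being used here only as an input.
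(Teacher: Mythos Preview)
Your proposal is correct, and you arrive at the same endpoint as the paper: the paper does not prove Lemma~\ref{lem-1} at all but simply quotes it from Kiuchi~\cite{K}. Your closing sentence (``Since all of this is carried out in \cite{K}, I would simply cite that paper for the details'') is therefore exactly what the paper does, and the preceding sketch of the Estermann-series/contour-shift/truncated-Voronoi argument is an accurate summary of Kiuchi's method, not an alternative route.
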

Later, Roy \cite{R} derived an extended form of Voronoi summation formulas through some extensions from Oppenheim's article \cite{O}.
\begin{lemma}
For $-1/2<a<0$, we have
\begin{equation}\begin{aligned}
\Delta_a(x;h/k)=&
-2\pi^{-1}\cos(\pi a/2)x^{(1+a)/2}\sum_{n>0}\sigma_a(n)e\left(-\frac{hn}{k}\right)n^{-(1+a)/2}\\
&\times[K_{a+1}(4\pi\sqrt{nx}/k)+1/2\pi Y_{a+1}(4\pi\sqrt{nx}/k)]\\
&-\sin(\pi a/2)x^{(1+a)/2}\sum_{n>0}\sigma_a(n)e\left(-\frac{hn}{k}\right)n^{-(1+a)/2}J_{1+a}(4\pi\sqrt{nx}/k)\\
&+O(x^{-3/4+a/2}),
\label{eq:d}
\end{aligned}\end{equation}
\end{lemma}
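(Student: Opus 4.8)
Since \eqref{eq:d} is an additively twisted Voronoi summation formula for the coefficients $\sigma_a(n)e(hn/k)$, the plan is to derive it from Perron's formula together with the meromorphic continuation and functional equation of the Estermann zeta function $E_a(s;h/k)=\sum_{n\ge 1}\sigma_a(n)e(hn/k)n^{-s}$, recognising at the end that the integrals that arise are Bessel functions of order $a+1$; this is Oppenheim's method carried out in the presence of the twist. The first ingredient is the analytic input: $E_a(s;h/k)$, defined for $\Re s>1$, continues to a meromorphic function whose only poles are simple ones at $s=1$ and $s=1+a$, with residues elementary in $k$ and $\zeta(1\pm a)$, and it obeys a functional equation of the shape
\[
 E_a(s;h/k)=\Gamma(1-s)\,\Gamma(1+a-s)\,P_k(s)\Bigl\{A(s)\,E_a(1+a-s;\overline h/k)+B(s)\,E_a(1+a-s;-\overline h/k)\Bigr\},
\]
with $h\overline h\equiv 1\pmod k$, $P_k(s)$ equal to $(2\pi/k)^{2s}$ up to an elementary factor, and $A(s),B(s)$ fixed linear combinations of $\cos(\pi s/2)$, $\sin(\pi s/2)$, $\cos(\pi(s-a)/2)$ and $\sin(\pi(s-a)/2)$. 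One obtains this in the classical way: write the inner sum over multiples of $d$ as a finite combination of Hurwitz zeta-functions $\zeta(s,j/k)$, apply Hurwitz's functional equation, and collect the resulting Gauss sums; product-to-sum identities then regroup $A(s),B(s)$ into a piece proportional to $\cos(\pi a/2)$ and a piece proportional to $\sin(\pi a/2)$ --- precisely the two coefficients visible in \eqref{eq:d}.

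Next I would insert this into Perron's formula: for $c=1+\varepsilon$,
\[
 \sideset{}{'}\sum_{n\le x}\sigma_a(n)e(hn/k)=\frac{1}{2\pi i}\int_{(c)}E_a(s;h/k)\,\frac{x^{s}}{s}\,ds ,
\]
and then shift the contour to the line $\Re s=-3/4+a/2$; for $-1/2<a<0$ the only poles crossed are $s=1$, $s=1+a$ and $s=0$. The residues at $s=1$ and $s=1+a$ produce the two polynomial main terms of the asymptotic formula, the residue at $s=0$ produces the constant $E_a(0;h/k)$, and, by the very definition of $\Delta_a$, what is left equals $\Delta_a(x;h/k)$ up to the Perron and horizontal-segment errors. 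On the line $\Re s=-3/4+a/2$ one has $\Re(1+a-s)=7/4+a/2>1$, so after applying the functional equation the factors $E_a(1+a-s;\pm\overline h/k)$ may be expanded into their Dirichlet series and, after a truncation in $|t|$, integrated term by term. The $n$-th term is then an integral of the type
\[
 \frac{1}{2\pi i}\int\Gamma(1-s)\,\Gamma(1+a-s)\,(\text{trigonometric factor})\,\Bigl(\tfrac{4\pi^{2}nx}{k^{2}}\Bigr)^{s}\frac{ds}{s},
\]
which is exactly a Mellin--Barnes representation of a Bessel function of order $a+1$ and argument $4\pi\sqrt{nx}/k$: the pieces of the trigonometric factor that pair with $\Gamma(1-s)$ through $\Gamma(s)\Gamma(1-s)=\pi/\sin\pi s$ produce $J_{1+a}$ and $Y_{1+a}$, the remaining ``even'' piece produces $K_{1+a}$, and the prefactors $-2\pi^{-1}\cos(\pi a/2)$ in front of $K_{1+a}+\tfrac12\pi Y_{1+a}$ and $-\sin(\pi a/2)$ in front of $J_{1+a}$ fall out of $A(s),B(s)$ together with the standard normalisations of these Bessel functions. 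This reproduces the two Dirichlet series over $n>0$ in \eqref{eq:d}.

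What survives this procedure besides the displayed Bessel series is: the Perron truncation error, the horizontal segments, the tails of the truncated Mellin--Barnes integrals, and a companion Bessel series carrying the conjugate twist $e(+\overline h n/k)$ that the functional equation also generates. Showing that the total of these is $O(x^{-3/4+a/2})$ --- this is where Oppenheim \cite{O}, and with the twist Roy \cite{R}, do the real work --- is the content of the lemma; the line $\Re s=-3/4+a/2$ is chosen precisely so that this is the size one can afford. Since on that line the integrand does not decay fast enough in $|t|$ for the $n$-sum and the $s$-integral to be interchanged absolutely, the two Bessel series in \eqref{eq:d} are only conditionally convergent (just as Voronoi's series for $d(n)$ is), so the bound cannot be read off a single Mellin transform: one inserts the full asymptotic expansions of $J_{1+a}$ and $Y_{1+a}$ (the $K_{1+a}$-series converges absolutely and is harmless), turning the relevant series into exponential sums $\sum_n\sigma_a(n)\,e(-\overline h n/k)\,e(\pm 2\sqrt{nx}/k)\,n^{-3/4-a/2+\varepsilon}$ (and the analogous sum with the conjugate twist for the companion series), and applies van der Corput--type first- and second-derivative estimates, with a suitable truncation parameter, to bound the tails and the companion series.

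The \emph{main obstacle} is exactly this last step --- the conditional convergence and the exponential-sum analysis needed to pin the error at $O(x^{-3/4+a/2})$ --- together with keeping the $\Gamma$-function and trigonometric bookkeeping consistent throughout, so that it is precisely $K_{1+a}+\tfrac12\pi Y_{1+a}$ that comes attached to $\cos(\pi a/2)$ and $J_{1+a}$ to $\sin(\pi a/2)$, and nothing of size $\gg x^{-3/4+a/2}$ is silently dropped. All of this is standard in the Voronoi-summation literature, and for the purposes of the present paper \eqref{eq:d} is quoted from \cite{R}.
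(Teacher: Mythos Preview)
Your proposal is essentially correct, and you yourself identify the key point at the end: the paper does not prove this lemma but quotes it from Roy~\cite{R}, who extends Oppenheim's identities~\cite{O} to the twisted setting. There is therefore no ``paper's own proof'' to compare against; the lemma is stated as input from the literature.

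That said, your sketch of how such a formula is established --- Perron's formula, the functional equation of the Estermann function $E_a(s;h/k)$ via Hurwitz's functional equation, a contour shift past $s=1,\,1+a,\,0$, and the identification of the resulting Mellin--Barnes integrals with $J_{1+a}$, $Y_{1+a}$, $K_{1+a}$ --- is the standard route and is faithful to what Oppenheim and Roy do. Your remarks on the conditional convergence of the $J$- and $Y$-series and the need for exponential-sum estimates to control the tail and nail down the $O(x^{-3/4+a/2})$ error are also on point; this is indeed where the analytic work lies.
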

where $K_{a+1}$, $Y_{a+1}$ and $J_{a+1}$ are the usual notations of Bessel functions.

\section{Preparation of proofs}
Assuming $u,v\in \mathbb{C}$, for $\Re{u}>1,\Re{v}>1$, we have:

$$\zeta(u)A(u)=\sum_{m=1}^{\infty}m^{-u}\sum_{k\le M}a(k)k^{-u}=\sum_{q=1}^{\infty}\left(\sum_{k\le M; k|q}a(k)\right) q^{-u}.$$
Therefore we have
$$ \zeta(u)\zeta(v)A(u)\overline{A(\bar{v})}=\sum^{\infty}_{q=1}\left(\sum_{l\le M,k|q}a(k)\right)q^{-u}\sum^{\infty}_{r=1}\left(\sum_{l\le M,k|r}a(k)\right)r^{-v}$$$$=\sum_{q=r}+\sum_{q<r}+\sum_{q>r}=B_0+B(u,v)+\overline{B(\bar{v},\bar{u})},$$
where $$B_0=\sum_{k\le M}\sum_{l\le M}a(k)\overline{a(l)}\sum_{[k,l]|r}r^{-u-v}=\zeta(u+v)\sum_{k\le M}\sum_{l\le M}\frac{a(k)\overline{a(l)}}{[k,l]^{u+v}}$$
and 
 $$B(u,v)=\sum_{m=1}^{\infty}\sum_{n=1}^{\infty}b(m)\overline{b(m+n)}m^{-u}(m+n)^{-v}$$
 with $$b(m)=\sum_{k\le M,k|m}a(k).$$
Letting $m=hk$, then
\begin{align*}
B(u,v)=&\sum_{m=1}^{\infty}\sum_{n=1}^{\infty}b(m)\overline{b(m+n)}m^{-u}(m+n)^{-v}\\
&=\sum_{k\le M}\sum_{l\le M}\frac{a(k)\overline{a(l)}}{k^u}\sum_{h,n\ge1;l|(hk+n)}\frac{1}{h^u(hk+n)^v}\\
&=\sum_{k\le M}\sum_{l\le M}\frac{a(k)\overline{a(l)}}{k^ul}\sum_{h=1}^{\infty}\frac{1}{h^u}\sum_{n=1}^{\infty}\frac{1}{(hk+n)^v}\sum_{f=1}^le^{2\pi i(hk+n)f/l}.
\end{align*}
Since the summation and the integration are absolute convergence in $y>0,\ \Re{v}>1,\ \Re{(u+v)}>2$, we have
\begin{equation}\begin{aligned}\Gamma(v)\sum_{n=1}^{\infty}e^{2\pi i fn/l}(hk+n)^{-v}&=\sum^{\infty}_{n=1}e^{2\pi ifn/l}\int^{\infty}_0y^{v-1}e^{-(hk+n)y}dy\\
&=\int^{\infty}_0\frac{y^{v-1}e^{-hky}}{e^{y-2\pi if/l}-1}dy,
\label{eq1}
\end{aligned}\end{equation}
and, for $\Re{u}>0,y>0$,
\begin{align*}\begin{aligned}k^{-u}\Gamma(u)\sum^{\infty}_{h=1}e^{-hky+2\pi ifhk/l}h^{-u}&=\int^{\infty}_0\sum^{\infty}_{h=1}e^{-hky+2\pi ifhk/l}e^{-hkx}x^{u-1}dx\\
&=\int^{\infty}_0\frac{x^{u-1}}{e^{k(x+y)-2\pi ifk/l}-1}dx.
\end{aligned}\end{align*}
So $B(u,v)$ can be extended meromorphically to the region $\Re{u}>0,\ \Re{v}>1,\ \Re{(u+v)}>2$ in the form
\begin{equation}\begin{aligned}B(u,v)=\frac{1}{\Gamma(u)\Gamma(v)}\sum_{k\le M}\sum_{l\le M}\frac{a(k)\overline{a(l)}}{l}\sum_{f=1}^l\int^{\infty}_0\frac{y^{v-1}}{e^{y-2\pi if/l}-1}\int^{\infty}_0\frac{x^{u-1}}{e^{k(x+y)-2\pi ifk/l}-1}dxdy.
\label{eq2}
\end{aligned}\end{equation}

Next, define
$$H(z;k,l,f)=\frac{1}{e^{kz-2\pi ifk/l}-1}-\frac{\delta(f)}{kz},
$$
where $\delta(f)=1$ or $0$ according to the condition whether $l$ divides $kf$ or not. The function $H(z;k,l,f)$ is holomorphic at $z=0$ and is $O(\min\{z^{-1},1\})$ for $z\ge0$.
So the inner integral on the right side of \eqref{eq2} can be written as
$$\frac{\delta(f)}{k}\int^{\infty}_0\frac{x^{u-1}}{x+y}dx+\int^{\infty}_0x^{u-1}H(x+y;k,l,f)dx,
$$
and since $$\int^{\infty}_0\frac{x^{u-1}}{x+y}dx=y^{u-1}\Gamma(u)\Gamma(1-u), \ \ \ \ \ 0<\Re{u}<1$$
we can write the right side of \eqref{eq2} by 
\begin{equation}
\frac{\Gamma(1-u)}{\Gamma(v)}\sum_{k\le M}\sum_{l\le M}\frac{a(k)\overline{a(l)}}{kl}\sum_{f=1}^l\delta(f)\int^{\infty}_0\frac{y^{u+v-2}}{e^{y-2\pi if/l}-a}dy+g(u,v;A)
\label{eq.5}
\end{equation}
in the region $0<\Re{u}<1,\ \Re{(u+v)}>2$, where
\begin{equation}\begin{aligned}
g(u,v;A)=&\frac{1}{\Gamma(u)\Gamma(v)}\sum_{k\le M}\sum_{l\le M}\frac{a(k)\overline{a(l)}}{l}\sum_{f=1}^l\int^{\infty}_0\frac{y^{v-1}}{e^{y-2\pi if/l}-1}\int^{\infty}_0x^{u-1}H(x+y;k,l,f)dx dy.
\label{eq3}
\end{aligned}\end{equation}
Similar to \eqref{eq1}, we see that the integral in the first term of \eqref{eq.5} equals
$$\Gamma(u+v-1)\sum^{\infty}_{m=1}e^{2\pi imf/l}m^{-u-v+1}.$$
Therefore, we obtain
$$B(u,v)=\frac{\Gamma(1-u)}{\Gamma(v)}\Gamma(u+v-1)\sum_{k\le M}\sum_{l\le M}\frac{a(k)\overline{a(l)}}{kl}\sum_{f=1}^l\delta(f)\phi\left(v+u-1,\frac{f}{l}\right)+g(u,v;A),$$
where $\phi(s,\alpha)=\sum_{m\ge 1}e^{2\pi\alpha im}m^{-s}$ is the Lerch zeta-function. Since
$$\sum^l_{f=1}\delta(f)\phi\left(u+v-1,\frac{f}{l}\right)=\sum^{(k,l)}_{j=1}\phi\left(u+v-1,\frac{j}{(k,l)}\right)=\frac{\zeta(u+v-1)}{(k,l)^{u+v-2}},$$
we obtain 
$$B(u,v)=\frac{\Gamma(1-u)}{\Gamma(v)}\Gamma(u+v-1)\zeta(u+v-1)\sum_{k\le M}\sum_{l\le M}\frac{a(k)\overline{a(l)}}{(k,l)^{u+v-1}[k,l]}+g(u,v;A)
$$
in the region $0<\Re{u}<1,\ \Re{(u+v)}>2$.

\section{The analytic continuation of $g(u,v;A)$}
Let $C$ be the contour which comes from $+\infty$ along the positive real axis, rounds the origin counterclockwise, and goes back to $+\infty$ again along the positive real axis. Then, for $0<\Re{u}<1,\ \Re{(u+v)}>2$, since
$$\int_Cx^{u-1}H(x+y;k,l,f)dx=\int^{ \varepsilon}_{\infty}+\int_{\circlearrowleft}+\int^{\infty}_{\varepsilon}=(e^{2\pi iu}-1)\int_{0}^{\infty}x^{u-1}H(x+y;k,l,f)dx$$
and we can do the same deformation for $y$, so we can write $g(u,v;A)$ as
\begin{equation}\begin{aligned}\frac{1}{\Gamma(u)\Gamma(v)(e^{2\pi iu}-1)(e^{2\pi iv}-1)}\sum_{k\le M}\sum_{l\le M}\frac{a(k)\overline{a(l)}}{l}\sum_{f=1}^l\int_C\frac{y^{v-1}}{e^{y-2\pi if/l}-1}\int_Cx^{u-1}H(x+y;k,l,f)dxdy,
\label{eq4}
\end{aligned}\end{equation}
where the right side double integral is convergent for $\Re{u}<1$ and any $v\in\mathbb{C}$. Hence \eqref{eq4} gives the meromorphic extension of $g(u,v;A)$ to the region, and therefore \eqref{eq3} is also valid in that region.

Therefore we obtain
\begin{align*}\begin{aligned}
\zeta(u)\zeta(v)&A(u)\overline{A(\bar{v})}=\zeta(u+v)\sum_{k\le M}\sum_{l\le M }\frac{a(k)\overline{a(l)}}{[k,l]^{u+v}}\\
&+\frac{\Gamma(1-u)}{\Gamma(v)}\Gamma(u+v-1)\zeta(u+v-1)\sum_{k\le M}\sum_{l\le M }\frac{a(k)\overline{a(l)}}{(k,l)^{u+v-1}[k,l]}\\
&+\frac{\Gamma(1-v)}{\Gamma(u)}\Gamma(u+v-1)\zeta(u+v-1)\sum_{k\le M}\sum_{l\le M }\frac{a(l)\overline{a(k)}}{(k,l)^{u+v-1}[k,l]}+g(u,v;A)+\overline{g(\bar{v},\bar{u};A)}.
\end{aligned}\end{align*}
Since changing $k$ and $l$ in the third double sum, we can combine the second and the third terms.

Now assume $0<\Re{u}<1$, and take $v=2\sigma-u$. We have
\begin{align*}\begin{aligned}
&\zeta(u)\zeta(2\sigma-u)A(u)\overline{A(\overline{2\sigma-u})}=\zeta(2\sigma)\sum_{k\le M}\sum_{l\le M }\frac{a(k)\overline{a(l)}}{[k,l]^{2\sigma}}\\
&+\left\{\frac{\Gamma(1-u)}{\Gamma(2\sigma-u)}+\frac{\Gamma(1-2\sigma+u)}{\Gamma(u)}\right\}\Gamma(2\sigma-1)\zeta(2\sigma-1)\sum_{k\le M}\sum_{l\le M }\frac{a(k)\overline{a(l)}}{(k,l)^{2\sigma-1}[k,l]}\\
&+g(u,2\sigma-u;A)+\overline{g(\overline{2\sigma-u},\bar{u};A)}
\end{aligned}\end{align*}
for $0<\Re{u}<1$.

Next, we consider another expression of $g(u,2\sigma-u;A)$ in the region $\Re{u}<0$.
Assume $R$ is a large positive integer, and $$C_R=C_R(k,l,f)=\left\{x=-y+\frac{2\pi if}{l}+\frac{2\pi}{k}(R+\frac{1}{2})e^{i\theta}|0\le\theta<2\pi\right\}.$$
Then we can easily see that $$H(x+y;k,l,f)=\frac{1}{e^{2\pi(R+1/2)e^{i\theta}}-1}-\frac{\delta(f)}{\frac{2\pi ifk}{l}+2\pi(R+1/2)e^{i\theta}}\ll1$$
if $x\in C_R(k,l,f)$. Hence
$$
\int_{C_R}H(x+y)x^{u-1}dx\ll\int^{2\pi}_0R^{\Re{u}-1}Rd\theta\ll R^{\Re{u}},
$$
which tends to 0 as $R\to\infty$ if $\Re{u}<0.$ Hence
\begin{equation}
\int_CH(x+y)x^{u-1}dx=-2\pi i \sum^*_n{Res_n(H(x+y)x^{u-1})}
\label{eq5}
\end{equation}
if $\Re{u}<0$, where $Res_n(*)$ denotes the residue of the function at $x=-y+2\pi i(f/l+n/k)$ and $\sum^*_n$ means the summation running over all integers $n\neq -kf/l$. Since
$$Res_{n}(H(x+y)x^{u-1})=\frac{1}{k}\left(-y+2\pi i\left(\frac{f}{l}+\frac{n}{k}\right)\right)^{u-1},$$
If we can interchange the summation and integration, we obtain
\begin{align*}\begin{aligned}
g(u,2\sigma-u;A)=\frac{-2\pi i}{\Gamma(u)\Gamma(2\sigma-u)(e^{2\pi iu}-1)(e^{2\pi i(2\sigma-u)-1})}
\sum_{k\le M}\sum_{l\le M}\frac{a(k)\overline{a(l)}}{kl}\sum_{f=1}^l\sum^*_nI(n;k,l,f),
\end{aligned}\end{align*}
where
\begin{align*}\begin{aligned}I(n;k,l,f)&=\int_C\frac{y^{2\sigma-u-1}}{e^{y-2\pi if/l}-1}\left(-y+2\pi i\left(\frac{f}{l}+\frac{n}{k}\right)\right)^{u-1}dy\\
&=(e^{2\pi i(2\sigma-1-u)}-1)\int^{\infty}_0\frac{y^{2\sigma-1-u}}{e^{y-2\pi if/l}-1}\left(-y+2\pi i\left(\frac{f}{l}+\frac{n}{k}\right)\right)^{u-1}dy.
\end{aligned}\end{align*}
In case $n>\frac{-kl}{f}$ we can write
$$-y+2\pi i\left(\frac{f}{l}+\frac{n}{k}\right)=e^{\pi i}y+2\pi e^{i\pi/2}\left(\frac{f}{l}+\frac{n}{k}\right)$$
and setting 
$$y=2\pi\left(\frac{f}{l}+\frac{n}{k}\right)e^{-\pi i/2}\eta,\ \ \ \arg\eta=\frac{\pi}{2} $$
we have
\begin{equation}\begin{aligned}
&I(n;k,l,f)=(e^{2\pi i(2\sigma-u)}-1)\int^{\infty}_0\frac{y^{2\sigma-1-u}}{e^{y-2\pi if/l}-1}\left(-y+2\pi i\left(\frac{f}{l}+\frac{n}{k}\right)\right)^{u-1}dy\\
&=-(e^{2\pi i(2\sigma-u)}-1)\int^{\infty}_0\frac{(2\pi\left(\frac{f}{l}+\frac{n}{k}\right)e^{-\pi i/2}\eta)^{2\sigma-1-u}}{\exp{(-2\pi i((\frac{f}{l}+\frac{n}{l})\eta+\frac{f}{l}))-1} }\left[2\pi i\left(\frac{f}{l}+\frac{n}{k}\right)\right]^{u}(1+\eta)^{u-1}d\eta\\
&=(1-e^{2\pi i(2\sigma-u)})(e^{\pi i/2})^{-2\sigma+1+2u}\left[2\pi \left(\frac{f}{l}+\frac{n}{k}\right)\right]^{2\sigma-1}\int^{i\infty}_0\frac{\eta^{2\sigma-1-u}(1+\eta)^{u-1}d\eta}{\exp{(-2\pi i((\frac{f}{l}+\frac{n}{l})\eta+\frac{f}{l}))} -1}\\
%&=(e^{\pi i(2\sigma-u)}-e^{-\pi i(2\sigma-u)}(e^{\pi i})^{\sigma+3/2}\left[2\pi \left(\frac{f}{l}+\frac{n}{k}\right)\right]^{2\sigma-1}\int^{i\infty}_0\frac{\eta^{2\sigma-1-u}(1+\eta)^{u-1}d\eta}{\exp{(-2\pi i((\frac{f}{l}+\frac{n}{l})\eta+\frac{f}{l}))}-1 }\\
&=(e^{\pi i(2\sigma-u)}-e^{-\pi i(2\sigma-u)}(e^{\pi i})^{\sigma+3/2}\left[2\pi \left(\frac{f}{l}+\frac{n}{k}\right)\right]^{2\sigma-1}\\
&\times\sum_{m=1}^{\infty}\int^{i\infty}_0\eta^{2\sigma-1-u}(1+\eta)^{u-1}\exp{\left(2\pi im\left(\left(\frac{f}{l}+\frac{n}{l}\right)\eta+\frac{f}{l}\right)\right)}d\eta,
\label{eq6}
\end{aligned}\end{equation}
where the second equality is valid if the summation and the integration can be interchanged.

In case $n<kf/l$, since 
$$-y+2\pi i\left(\frac{f}{l}+\frac{n}{k}\right)=e^{\pi i}y+2\pi e^{3i\pi/2}\left|\frac{f}{l}+\frac{n}{k}\right|,$$
setting
$$y=2\pi\left|\frac{f}{l}+\frac{n}{k}\right|e^{-\pi i/2}\eta,\ \ \ \arg\eta=-\frac{\pi}{2} ,$$
we find that $I(n;k,l,f)$ has an expression similar to that in \eqref{eq6} with the integral over the interval $[0,-i\infty)$, instead of $[0,i\infty)$. Substituting these results into \eqref{eq5}, we obtain
\begin{align*}\begin{aligned}
g(u,2\sigma-u;A)=&\frac{-i(e^{\pi i(2\sigma-u)}-e^{-\pi i(2\sigma-u)})(e^{\pi i})^{\sigma+3/2}\left[2\pi \right]^{2\sigma}}{\Gamma(u)\Gamma(2\sigma-u)(e^{2\pi iu}-1)(e^{2\pi i(2\sigma-u)-1})}\sum_{k\le M}\sum_{l\le M}\frac{a(k)\overline{a(l)}}{kl}\left(J_+(k,l)+J_-(k,l)\right),
\end{aligned}\end{align*}
where\begin{align*}\begin{aligned}
J_{\pm}(k,l)&=\sum_{f=1}^l\sum_n^*\left(\frac{f}{l}+\frac{n}{k}\right)^{2\sigma-1}\sum_{m=1}^{\infty}\int_0^{\pm i\infty}\eta^{2\sigma-1-u}(1+\eta)^{u-1}\exp{\left(2\pi im\left(\left(\frac{f}{l}+\frac{n}{l}\right)\eta+\frac{f}{l}\right)\right)}d\eta
\end{aligned}\end{align*}
and the summation with respect to $n$ runs over all $n>-kf/l$ (resp. all $n<-kf/l$) for $J_+$ (resp. $J_-$).

Using the notations $\kappa=\frac{k}{(k,l)}$ and $\lambda=\frac{l}{(k,l)}$, we have
\begin{align*}\begin{aligned}
\exp{\left(2\pi im\left(\left(\frac{f}{l}+\frac{n}{l}\right)\eta+\frac{f}{l}\right)\right)}=\exp{\left(2\pi i\frac{mf}{l}\right)}\exp{\left(2\pi i\frac{m(f\kappa+n\lambda)}{(k,l)\kappa\lambda}\eta\right)}.
\end{aligned}\end{align*}
Put $h=f\kappa+n\lambda$. If $n>-kf/l$, then $h>0$. On the other hand, for any positive integer $h$, we can find integers $f$ and $n$ such that $1\le f\le l$ and $h=f\kappa+n\lambda$. In fact, since $(\kappa,\lambda)=1$ we can find integers $x,\ y$ with $x\kappa+y\lambda=1$. Then $f=hx+v\lambda$ and $n=hy-v\kappa$ for any integer $v$ satisfy $h=f\kappa+n\lambda$. Choosing $v$ suitably we have $1\le f\le l$, as desired. Therefore
\begin{align*}\begin{aligned}
J_{+}(k,l)=\sum_{m=1}^{\infty}\sum_{h=1}^{\infty}\left(\frac{h}{\kappa\lambda(k,l)}\right)^{2\sigma-1}\sum_f\exp{\left(2\pi i\frac{mf}{l}\right)}\int_0^{\pm i\infty}\eta^{2\sigma-1-u}(1+\eta)^{u-1}\exp{\left(2\pi i\frac{mh}{(k,l)\kappa\lambda}\eta\right)}d\eta,
\end{aligned}\end{align*}
where the innermost sun runs over all $f$ satisfying $1\le f\le l$ and there exists an integer $n$ with $h=f\kappa+n\lambda$.
Recall that $\bar{\kappa}$ is an integer satisfying $\kappa\bar{\kappa}\equiv1(\mod\lambda)$. Then $h=f\kappa+n\lambda$ implies $f\equiv\bar{\kappa}(\mod \lambda)$. The number of $f$ satisfying this congruence condition and $1\le  f\le l$ is $l/\lambda=(k,l)$. Hence
\begin{align*}\begin{aligned}
\sum_f\exp{\left(2\pi i\frac{mf}{l}\right)}=\sum^{(k,l)}_{j=1}\exp{\left(2\pi i\frac{m}{l}(h\bar{\kappa}+j\lambda)\right)}
=\exp{\left(2\pi i\frac{mh\bar{\kappa}}{l}\right)}\sum^{(k,l)}_{j=1}\exp{\left(2\pi i\frac{mj}{(k,l)}\right)},
\end{aligned}\end{align*}
and the inner sum equals $(k,l)$ if $(k,l)|m$ and 0 otherwise. Therefore, setting $m=(k,l)\mu$, we have
$$J_+(k,l)=(k,l)\sum_{\mu=1}^{\infty}\sum_{h=1}^{\infty}\left(\frac{h}{k\lambda(k,l)}\right)^{2\sigma-1}\exp{\left(2\pi i\frac{\mu h\bar{\kappa}}{\lambda}\right)}\int^{i\infty}_0\eta^{2\sigma-1-u}(1+\eta)^{u-1}\exp{\left(2\pi i\frac{\mu h}{\kappa\lambda}\eta\right)}d\eta,
$$
and setting further $\mu h=n$, we obtain
\begin{equation}\begin{aligned}
J_+(k,l)&=\frac{(k,l)}{\left(\kappa\lambda(k,l)\right)^{2\sigma-1}}\sum_{n=1}^{\infty}\left(\sum_{h|n}h^{2\sigma-1}\right)\exp{\left(2\pi i\frac{n\bar{\kappa}}{\lambda}\right)}\int^{i\infty}_0\eta^{2\sigma-1-u}(1+\eta)^{u-1}e^{2\pi in\eta/\kappa\lambda}d\eta\\
&=\frac{(k,l)}{\left(\kappa\lambda(k,l)\right)^{2\sigma-1}}\sum_{n=1}^{\infty}\sigma_{2\sigma-1}(n)\exp{\left(2\pi i\frac{n\bar{\kappa}}{\lambda}\right)}\int^{i\infty}_0\eta^{2\sigma-1-u}(1+\eta)^{u-1}e^{2\pi in\eta/\kappa\lambda}d\eta.
\label{eq7}
\end{aligned}\end{equation}
Similarly,
\begin{equation}\begin{aligned}\label{eq8}
J_-(k,l)&=\frac{(k,l)}{\left(\kappa\lambda(k,l)\right)^{2\sigma-1}}\sum_{n=1}^{\infty}\sigma_{2\sigma-1}(n)\exp{\left(-2\pi i\frac{n\bar{\kappa}}{\lambda}\right)}\int^{i\infty}_0\eta^{2\sigma-1-u}(1+\eta)^{u-1}e^{-2\pi in\eta/\kappa\lambda}d\eta.
\end{aligned}\end{equation}
The integrals on the right side of \eqref{eq7} and \eqref{eq8} are estimated as $O(n^{\Re{u}-2\sigma})$, so the infinite series including those integrals are absolutely convergent for $\Re{u}<0$. Hence we can now justify the above interchanges of summation and integration.

The next thing is to change the integral above from $[0,i\infty)$ to $[0,\infty)$. This will allow the application of the integral over $[0,\infty)$ using Atkinson's Lemma.

So above all, we find 
\begin{equation}\begin{aligned}\label{eq:g}
g(u,2\sigma-u;A)=&\frac{-2\pi i(e^{\pi i(2\sigma-u)}-e^{-\pi i(2\sigma-u)}(e^{\pi i})^{\sigma+3/2}\left[2\pi \right]^{2\sigma-1}}{\Gamma(u)\Gamma(2\sigma-u)(e^{2\pi iu}-1)(e^{2\pi i(2\sigma-u)-1})}\\
&\times\sum_{k\le M}\sum_{l\le M}\frac{a(k)\overline{a(l)}}{[k,l]^{2\sigma}}\sum_{n\neq0}\sigma_{2\sigma-1}(|n|)\exp{\left(2\pi i\frac{n\bar{\kappa}}{\lambda}\right)}h(u,\frac{n}{\kappa\lambda})
\end{aligned}\end{equation}
for $\Re{u}<0,$ where 
\begin{equation*}
h(u,x)=\int^{\infty}_0y^{2\sigma-1-u}(1+y)^{u-1}\exp{(2\pi ixy)}dy.
\end{equation*}

%%%%%%%%%%%%%%%%%%%%%%%%%%%%%%%%%%%%%%%%%%%%%

Since $h(u,x)=O(x^{\Re{u}-2\sigma})$, by using the \eqref{eq:D}, we can find $h(u,x/\kappa\lambda)D_{2\sigma-1}(x;\bar{\kappa}/\lambda)=O(x^{\Re{u}})$, when $\Re{u}<0$, and $h(u,\frac{x}{\kappa\lambda})D_{2\sigma-1}(x,\frac{\bar{\kappa}}{\lambda})\to0$ as $x\to\infty$. Therefore by integration by parts we have
\begin{equation}\begin{aligned}
&\sum_{n>x}\sigma_{2\sigma-1}(n)e\left(\frac{\bar{\kappa}n}{\lambda}\right)h\left(u,\frac{n}{\kappa\lambda}\right)\\
&=\int^{\infty}_xh\left(u,\frac{n}{\kappa\lambda}\right)dD_{2\sigma-1}\left(x,\frac{\bar{\kappa}}{\lambda}\right)\\
&=-h\left(u,\frac{x}{\kappa\lambda}\right)D_{2\sigma-1}\left(x,\frac{\bar{\kappa}}{\lambda}\right)-\int_x^{\infty}\frac{\partial{h\left(u,\frac{x}{\kappa\lambda}\right)}}{\partial{x}}D_{2\sigma-1}\left(x,\frac{\bar{\kappa}}{\lambda}\right)dx.
\label{eq:deh}
\end{aligned}\end{equation}
Substituting \eqref{eq:deh} into the integral on the right side of \eqref{eq:g}, and applying integration by parts once more, we obtain
$$\sum_{n=1}^{\infty}\sigma_{2\sigma-1}(n)e\left(\frac{\bar{\kappa}n}{\lambda}\right)h\left(u,\frac{n}{\kappa\lambda}\right)=g_1(u)-g_2(u)+g_3(u)-g_4(u)
$$
for $\Re u<0$, where
\begin{equation*}\begin{aligned}
&g_1(u)=\sum_{n\le X}\sigma_{2\sigma-1}(n)e\left(\frac{\bar{\kappa}n}{\lambda}\right)h\left(u,\frac{n}{\kappa\lambda}\right),\\
&g_2(u)=h\left(u,\frac{X}{\kappa\lambda}\right)\Delta_{2\sigma-1}\left(X,\frac{\bar{\kappa}}{\lambda}\right),\\
&g_3(u)=\int^{\infty}_X\left(\frac{\zeta(2-2\sigma)}{\lambda^{2-2\sigma}}+\frac{\zeta(2\sigma)}{\lambda^{2\sigma}}x^{2\sigma-q}\right)h\left(u,\frac{x}{\kappa\lambda}\right)dx,\ \ \\
&g_4(u)=\int_X^{\infty}\frac{\partial{h\left(u,\frac{x}{\kappa\lambda}\right)}}{\partial{x}}\Delta_{2\sigma-1}\left(x,\frac{\bar{\kappa}}{\lambda}\right)dx.
\end{aligned}\end{equation*}
Similarly, we have 
$$\sum_{n=1}^{\infty}\sigma_{2\sigma-1}(n)e\left(-\frac{\bar{\kappa}n}{\lambda}\right)h\left(u,\frac{-n}{\kappa\lambda}\right)=\overline{g_1(\bar{u})-g_2(\bar{u})+g_3(\bar{u})-g_4(\bar{u})}$$
hence from \eqref{eq:g} we have
\begin{equation}\begin{aligned}g(u,2\sigma-u;A)=&\frac{-2\pi i(e^{\pi i(2\sigma-1-u)}-e^{-\pi i(2\sigma-1-u)}(e^{\pi i})^{\sigma+1/2}\left[2\pi \right]^{2\sigma-1}}{\Gamma(u)\Gamma(2\sigma-u)(e^{2\pi iu}-1)(e^{2\pi i(2\sigma-u)-1})}\\
&\times\sum_{k\le M}\sum_{l\le M}\frac{a(k)\overline{a(l)}}{[k,l]^{2\sigma}}\left(g_1(u)-g_2(u)+g_3(u)-g_4(u)\right.\\
&\left.+\overline{g_1(\bar{u})-g_2(\bar{u})+g_3(\bar{u})-g_4(\bar{u})}\right)
\label{eq:gg}
\end{aligned}\end{equation}
for $\Re u<0$.
In view of $h(u,x)=O(x^{\Re u-1})$, $g_1(u)$ and $g_2(u)$ are holomorphic in the region $\Re u<2\sigma$. Also, we have $$\frac{\partial{h\left(u,\frac{x}{\kappa\lambda}\right)}}{\partial{x}}=O(x^{\Re u-2\sigma-1}),$$
and
 $$\frac{\partial{h\left(u,\frac{x}{\kappa\lambda}\right)}}{\partial{x}}\Delta_{2\sigma-1}\left(x,\frac{\bar{\kappa}}{\lambda}\right)=O(x^{\Re u-2\sigma-1+2\sigma-1+1/(5-4\sigma)+\varepsilon})=O(x^{\Re u-2+1/(5-4\sigma)+\varepsilon}).$$
These estimates imply that $g_4(u)$ is holomorphic for $\Re{u}<1-\frac{1}{5-4\sigma}$. 

As for $g_3(u)$, 
\begin{align*}\begin{aligned}
&\int^{\infty}_X\left(\frac{\zeta(1-a)}{\lambda^{1-a}}+\frac{\zeta(1+a)}{\lambda^{1+a}}x^{a}\right)\int^{\infty}_0y^{2\sigma-1-u}(1+y)^{u-1}\exp{(2\pi i\frac{xy}{\kappa\lambda})}dydx\\
&=\left[\left(\frac{\zeta(1-a)}{\lambda^{1-a}}+\frac{\zeta(1+a)}{\lambda^{1+a}}x^{a}\right)\int^{\infty}_0y^{2\sigma-1-u}(1+y)^{u-1}\frac{\exp{(2\pi i\frac{xy}{\kappa\lambda})}}{2\pi i\frac{y}{\kappa\lambda}}dy\right]^{\infty}_X\\
&-\int_X^{\infty}\frac{\zeta(2\sigma)}{\lambda^{2\sigma}}x^{2\sigma-2}dx\int_0^{\infty}y^{-u+2\sigma-2}(1+y)^{u-1}\frac{e^{2\pi i\frac{xy}{\kappa\lambda}}}{2\pi i/\kappa\lambda}dy\\
&=\frac{\zeta(2-2\sigma)\lambda^{2\sigma-2}+\frac{\zeta(2\sigma)}{\lambda^{2\sigma}}x^{2\sigma-1}}{-2\pi i}\kappa\lambda\int_0^{\infty}y^{-u+2\sigma-2}(1+y)^{u-1}e^{2\pi i\frac{xy}{\kappa\lambda}}dy\\
&-\int_X^{\infty}\frac{\zeta(2\sigma)}{\lambda^{2\sigma}}x^{2\sigma-2}dx\int_0^{\infty}y^{-u+2\sigma-2}(1+y)^{u-1}\frac{e^{2\pi i\frac{xy}{\kappa\lambda}}}{2\pi i/\kappa\lambda}dy.
\end{aligned}\end{align*}
the last equality is verified by the computation as
\begin{align*}\begin{aligned}
&-\int_X^{\infty}\frac{\zeta(2\sigma)}{\lambda^{2\sigma}}x^{2\sigma-2}dx\int_0^{\infty}y^{-u+2\sigma-2}(1+y)^{u-1}\frac{e^{2\pi i\frac{xy}{\kappa\lambda}}}{2\pi i/\kappa\lambda}dy\\
&=\frac{\kappa\zeta(2\sigma)(2\sigma-1)}{2\pi i\lambda^{2\sigma-1}}\int_X^{\infty}dx\int_0^{\infty}y^{-y+2\sigma-2}(x+y)^{u-1}e^{2\pi iy/\kappa\lambda}dy\\
&=\frac{\kappa\zeta(2\sigma)(2\sigma-1)}{2\pi i\lambda^{2\sigma-1}}u^{-1}\int_0^{\infty}y^{-y+2\sigma-2}\left[(x+y)^{u}\right]^{\infty}_Xe^{2\pi iy/\kappa\lambda}dy\\
&=\frac{\kappa\zeta(2\sigma)(2\sigma-1)}{2\pi i\lambda^{2\sigma-1}}\frac{X^{2\sigma-1}}{-u}\int_0^{\infty}y^{-u+2\sigma-2}(1+y)^ue^{2\pi i\frac{Xy}{\kappa\lambda}}dy.
\end{aligned}\end{align*}
Therefore, we obtain
\begin{equation}\begin{aligned}
g_3(u)+\overline{g_3(\bar{u})}&=\frac{\zeta(2-2\sigma)\lambda^{2\sigma-2}+\frac{\zeta(2\sigma)}{\lambda^{2\sigma}}X^{2\sigma-1}}{-\pi }\kappa\lambda\int_0^{\infty}y^{-u+2\sigma-2}(1+y)^{u-1}\sin{\left(2\pi \frac{Xy}{\kappa\lambda}\right)}dy\\
&+\frac{\kappa\zeta(2\sigma)(2\sigma-1)X^{2\sigma-1}}{\pi \lambda^{2\sigma-1}u}\int_0^{\infty}y^{-u+2\sigma-2}(1+y)^u\sin{\left(2\pi \frac{Xy}{\kappa\lambda}\right)}dy.
\label{eq:g3}
\end{aligned}\end{equation}
The two integrals on the right side are convergent uniformly in $\Re{u}<2-2\sigma$. Hence \eqref{eq:gg} and \eqref{eq:g3} gives an expression of $g(u,2\sigma-u;A)$ valid for $\Re{u}<\min{\left((1-\frac{1}{5-4\sigma}),2-2\sigma\right)}$. By the same argument we have and expression of $\overline{g(\overline{2\sigma-u},\bar{u};A)}$ which is valid for $\Re{u}>\max{\left((2\sigma-1+\frac{1}{5-4\sigma}),4\sigma-2\right)}$.

\section{The mean square and Atkinson's lemma}
Now let us consider the mean square of the $\zeta(\sigma+it)A(\sigma+it)$, letting $C_1$, $C_2$, and $C^{\star}$ be the same as in the statement of Theorem\ref{thm1}. Assume  $C_1T<Y<C_2T$, $T\ge C^{\star}$, and let $X=X(\kappa,\lambda)=\kappa\lambda Y$.
We have
\begin{equation*}\begin{aligned}
&\zeta(u)\zeta(2\sigma-u)A(u)\overline{A(\overline{2\sigma-u})}=\zeta(2\sigma)\sum_{k\le M}\sum_{l\le M }\frac{a(k)\overline{a(l)}}{[k,l]^{2\sigma}}\\
&+\left\{\frac{\Gamma(1-u)}{\Gamma(2\sigma-u)}+\frac{\Gamma(1-2\sigma+u)}{\Gamma(u)}\right\}\Gamma(2\sigma-1)\zeta(2\sigma-1)\sum_{k\le M}\sum_{l\le M }\frac{a(k)\overline{a(l)}}{(k,l)^{2\sigma-1}[k,l]}\\
&+g(u,2\sigma-u;A)+\overline{g(\overline{2\sigma-u},\bar{u};A)}
\end{aligned}\end{equation*}
and
\begin{equation*}\begin{aligned}&g(u,2\sigma-u;A)=\frac{-2\pi i(e^{\pi i(2\sigma-1-u)}-e^{-\pi i(2\sigma-1-u)}(e^{\pi i})^{\sigma+1/2}\left[2\pi \right]^{2\sigma-1}}{\Gamma(u)\Gamma(2\sigma-u)(e^{2\pi iu}-1)(e^{2\pi i(2\sigma-u)-1})}\\
&\times \sum_{k\le M}\sum_{l\le M}\frac{a(k)\overline{a(l)}}{[k,l]^{2\sigma}}\left(g_1(u)-g_2(u)+g_3(u)-g_4(u)+\right.\left.\overline{g_1(\bar{u})-g_2(\bar{u})+g_3(\bar{u})-g_4(\bar{u})}\right),
\end{aligned}\end{equation*}
where
\begin{align*}
g_1(u)=\sum_{n\le X}\sigma_{2\sigma-1}(n)e\left(\frac{\bar{\kappa}n}{\lambda}\right)h\left(u,\frac{n}{\kappa\lambda}\right),
\end{align*}
\begin{align*}
&g_2(u)=h\left(u,\frac{X}{\kappa\lambda}\right)\Delta_{2\sigma-1}\left(X,\frac{\bar{\kappa}}{\lambda}\right),\end{align*}
\begin{align*}&g_3(u)+\overline{g_3(\bar{u})}=\frac{\zeta(2-2\sigma)\lambda^{2\sigma-2}+\frac{\zeta(2\sigma)}{\lambda^{2\sigma}}X^{2\sigma-1}}{-\pi }\kappa\lambda\int_0^{\infty}y^{-u+2\sigma-2}(1+y)^{u-1}\sin{\left(2\pi \frac{Xy}{\kappa\lambda}\right)}dy\\
&\ \ \ \ \ \ \ \ \ \ \ \ \ \ \ +\frac{\kappa\zeta(2\sigma)(2\sigma-1)X^{2\sigma-1}}{\pi \lambda^{2\sigma-1}u}\int_0^{\infty}y^{-u+2\sigma-2}(1+y)^u\sin{\left(2\pi \frac{Xy}{\kappa\lambda}\right)}dy,\end{align*}
\begin{align*}g_4(u)=\int_X^{\infty}\frac{\partial{h\left(u,\frac{x}{\kappa\lambda}\right)}}{\partial{x}}\Delta_{2\sigma-1}\left(x,\frac{\bar{\kappa}}{\lambda}\right)dx,\end{align*}
\begin{align*}h(u,x)=\int^{\infty}_0y^{2\sigma-1-u}(1+y)^{u-1}\exp{(2\pi ixy)}dy.
\end{align*}
Let $u=\sigma+it$, and integrate the both sides on the interval $[T,2T]$. 

By Stirling's formula, we obtain
\begin{align*}\begin{aligned}
\log{\Gamma(\sigma+it)}=%&(\sigma+it-\frac{1}{2})\log(\sigma+it)-(\sigma+it)+\log\sqrt{2\pi}+\frac{1}{12(\sigma+it)}+O(t^{-3})\\
%&=\left(\sigma+it-\frac{1}{2}\right)\log{(it)}+(\sigma+it-\frac{1}{2})\left(\frac{\sigma}{it}-\frac{1}{2}\left(\frac{\sigma}{it}\right)^2+O(t^{-3})\right)\\
%&-\sigma-it+\log(\sqrt{2\pi})+\frac{1}{12}\frac{1}{\sigma+it}+O(t^{-3})\\
=\left(\sigma+it-\frac{1}{2}\right)\log{(it)}+\frac{\sigma^2-\sigma}{2it}-it+\frac{1}{12}\frac{1}{\sigma+it}+\log(\sqrt{2\pi})+O(t^{-2}),
\end{aligned}\end{align*}
then
\begin{align*}\begin{aligned}
\log\left(\frac{\Gamma(1-\sigma+it)}{\Gamma(\sigma+it)}\right)=(1-2\sigma)\log(it)+\frac{1}{12}\left(\frac{1}{1-\sigma+it}-\frac{1}{\sigma+it}\right)+O(t^{-2}).
\end{aligned}\end{align*}
Since
\begin{align*}\begin{aligned}
\left(\frac{1}{1-\sigma+it}-\frac{1}{\sigma+it}\right)=\frac{1}{it}\left(\frac{1}{\frac{1-\sigma}{it}+1}-\frac{1}{\frac{\sigma}{it}+1}\right)=-(1-2\sigma)t^{-2}+O(t^{-3})\ 
\end{aligned}\end{align*}
for $\left|\frac{\sigma}{it}\right|<1$,
then
\begin{align*}\begin{aligned}
\log\left(\frac{\Gamma(1-\sigma+it)}{\Gamma(\sigma+it)}\right)&=(1-2\sigma)\log(it)+O(t^{-2}).
\end{aligned}\end{align*}
So we have
\begin{align*}\begin{aligned}
\frac{\Gamma(1-\sigma+it)}{\Gamma(\sigma+it)}&=e^{(1-2\sigma)\log(it)+O(t^{-2})}=(it)^{(1-2\sigma)}(1+O(t^{-2}))=e^{(1-2\sigma)\frac{\pi i}{2}}t^{1-2\sigma}(1+O(t^{-2}));
\end{aligned}\end{align*}
and it is conjugate
\begin{equation*}\begin{aligned}
\frac{\Gamma(1-\sigma-it)}{\Gamma(\sigma-it)}&=e^{-(1-2\sigma)\frac{\pi i}{2}}t^{1-2\sigma}(1+O(t^{-2})).
\end{aligned}\end{equation*}
Hence we obtain
\begin{align*}\begin{aligned}
\int_T^{2T}\left\{\frac{\Gamma(1-\sigma-it)}{\Gamma(\sigma-it)}+\frac{\Gamma(1-\sigma+it)}{\Gamma(\sigma+it)}\right\}=\frac{\cos(\frac{2\sigma-1}{2}\pi)}{1-\sigma}(2^{1-\sigma}-1)T^{2-2\sigma}+O(1).
\end{aligned}\end{align*}
Noting when $u=\sigma+it$, $\frac{1}{\Gamma(u)\Gamma(2\sigma-u)\sin(\pi u)}=\frac{\Gamma(1-\sigma-it)}{\Gamma(\sigma-it)\pi}$, and we have $$\left(\frac{1}{\Gamma(\overline{2\sigma-\overline{2\sigma-u}})\Gamma(2\sigma-u)\sin(\pi (\overline{2\sigma-u}))}\right)=\left(\frac{\Gamma(1-\sigma-it)}{\Gamma(\sigma-it)\pi}\right).$$
So, we can write $g(u,2\sigma-u;A)+\overline{g(\overline{2\sigma-u},\bar{u};A)}$ as 
\begin{align*}\begin{aligned}
2\Re\left\{-(2\pi)^{2\sigma-1}e^{(1/2-3\sigma)\pi i}{\left\{\frac{\Gamma(1-\sigma-it)}{\Gamma(\sigma-it)}\sum_{i=1}^4(-1)^{i-1}(g_i(\sigma+it)+\overline{g_i(\sigma-it)})\right\}}\right\},
\end{aligned}\end{align*}
we write $$\int_T^{2T}g(u,2\sigma-u;A)+\overline{g(\overline{2\sigma-u},\bar{u};A)}dt=I_1-I_2+I_3-I_4,$$where
\begin{align*}
I_v=\sum_{k\le M}\sum_{l\le M}\frac{2}{[k,l]^{2\sigma}\pi}\Re\left\{Ca(k)\overline{a(l)}\int_T^{2T}{\left\{\frac{\Gamma(1-\sigma-it)}{\Gamma(\sigma-it)}(g_v(\sigma+it)+\overline{g_v(\sigma-it)})\right\}}dt\right\}
\end{align*}
for $v=1,2,3,4$ and $C=-(2\pi)^{2\sigma-1}e^{(1/2-3\sigma)\pi i}$.

Next, we calculate $\int_T^{2T}{\left\{\frac{\Gamma(1-\sigma-it)}{\Gamma(\sigma-it)}(g_v(\sigma+it)+\overline{g_v(\sigma-it)})\right\}}dt.$

For $v=1$, the double integral is absolutely convergent, and hence, by integration by parts, we have:
\begin{align*}\begin{aligned}
&\int_T^{2T}{\frac{\Gamma(1-\sigma-it)}{\Gamma(\sigma-it)}g_1(\sigma+it)}dt\\
&=\int_T^{2T}e^{-\pi i \frac{1-2\sigma}{2}}t^{1-2\sigma}(1+O(t^{-2}))\sum_{n\le X}\sigma_{2\sigma-1}(n)e\left(\frac{\bar{\kappa}n}{\lambda}\right)\int_0^{\infty}\frac{e(\frac{n}{\kappa\lambda}y)}{y^{1-\sigma}(1+y)^{1-\sigma}}e^{it(\log(\frac{1+y}{y}))}dydt\\
&=e^{-\pi i \frac{1-2\sigma}{2}}\sum_{n\le X}\sigma_{2\sigma-1}(n)e\left(\frac{\bar{\kappa}n}{\lambda}\right)\int_0^{\infty}\frac{e(\frac{n}{\kappa\lambda}y)}{y^{1-\sigma}(1+y)^{1-\sigma}}dy\int_T^{2T}e^{it\log{\frac{1+y}{y}}}t^{1-2\sigma}dt\\
&+e^{-\pi i \frac{1-2\sigma}{2}}\sum_{n\le X}\sigma_{2\sigma-1}(n)e\left(\frac{\bar{\kappa}n}{\lambda}\right)\int_0^{\infty}\frac{e(\frac{n}{\kappa\lambda}y)}{y^{1-\sigma}(1+y)^{1-\sigma}}dy\int_T^{2T}e^{it\log{\frac{1+y}{y}}}O(t^{-1-2\sigma})dt\\
&=e^{-\pi i \frac{1-2\sigma}{2}}\sum_{n\le X}\sigma_{2\sigma-1}(n)e\left(\frac{\bar{\kappa}n}{\lambda}\right)\int_0^{\infty}\frac{e(\frac{n}{\kappa\lambda}y)}{y^{1-\sigma}(1+y)^{1-\sigma}}dy\left\{\left[t^{1-2\sigma}\frac{e^{it\log\frac{1+y}{y}}}{i\log\frac{1+y}{y}}\right]^{2T}_T\right\}\\
&-e^{-\pi i \frac{1-2\sigma}{2}}\sum_{n\le X}\sigma_{2\sigma-1}(n)e\left(\frac{\bar{\kappa}n}{\lambda}\right)\int_T^{2T}(1-2\sigma)t^{-2\sigma}dt\int_0^{\infty}\frac{e(\frac{n}{\kappa\lambda}y)}{y^{1-\sigma}(1+y)^{1-\sigma}}\frac{e^{it\log(y/(1+y))}}{i\log(y/(1+y))}dy\\
&+e^{-\pi i \frac{1-2\sigma}{2}}\sum_{n\le X}\sigma_{2\sigma-1}(n)e\left(\frac{\bar{\kappa}n}{\lambda}\right)\int_T^{2T}O(t^{-1-2\sigma})dt\int_0^{\infty}\frac{e(\frac{n}{\kappa\lambda}y)}{y^{1-\sigma}(1+y)^{1-\sigma}}e^{it\log{\frac{1+y}{y}}}dy\\
&=I_{1M}-I_{r1}+I_{r2},
\end{aligned}\end{align*}
say, so changing the order of integration, we get
\begin{equation}\begin{aligned}
I_1&=\sum_{k\le M}\sum_{l\le M}\sum_{n\le X}(S_{1M}(t;n,k,l)|^{2T}_{t=T}-S_{r1}+S_{r2}),
\label{eq:SumI1}
\end{aligned}\end{equation}
\begin{equation*}\begin{aligned}
I_2&=\sum_{k\le M}\sum_{l\le M}\sum_{n\le X}(S_2(t;n,k,l)|^{2T}_{t=T}+S_{2r}),
%\label{eq:SumI2}
\end{aligned}\end{equation*}
where
\begin{align*}\begin{aligned}
&S_{1M}(t;n,k,l)\\
&=2(2\pi)^{2\sigma-1}\Im\left\{e^{-2\sigma\pi i}\frac{a(k)\overline{a(l)}}{[k,l]^{2\sigma}}\sigma_{2\sigma-1}(n)e\left(\frac{\bar{\kappa}n}{\lambda}\right)t^{1-2\sigma}\int_0^{\infty}\frac{e(\frac{n}{\kappa\lambda}y)e^{it\log\frac{1+y}{y}}}{y^{1-\sigma}(1+y)^{1-\sigma}\log\frac{1+y}{y}}dy\right\}\\
&+2(2\pi)^{2\sigma-1}\Im\left\{e^{-2\sigma\pi i}\frac{a(k)\overline{a(l)}}{[k,l]^{2\sigma}}\sigma_{2\sigma-1}(n)e\left(-\frac{\bar{\kappa}n}{\lambda}\right)t^{1-2\sigma}\int_0^{\infty}\frac{e(-\frac{n}{\kappa\lambda}y)e^{it\log\frac{1+y}{y}}}{y^{1-\sigma}(1+y)^{1-\sigma}\log\frac{1+y}{y}}dy\right\}\\
&=S_{11}(t;n,k,l)+S_{12}(t;n,k,l);
\end{aligned}\end{align*}
\begin{align*}\begin{aligned}
&S_{r1}(n,k,l)\\
&=2(1-2\sigma)(2\pi)^{2\sigma-1}\Im\left\{e^{-2\sigma\pi i}\frac{a(k)\overline{a(l)}}{[k,l]^{2\sigma}}\sigma_{2\sigma-1}(n)e\left(\frac{\bar{\kappa}n}{\lambda}\right)\int_T^{2T}t^{-2\sigma}dt\int_0^{\infty}\frac{e(\frac{n}{\kappa\lambda}y)e^{it\log\frac{1+y}{y}}}{y^{1-\sigma}(1+y)^{1-\sigma}\log\frac{1+y}{y}}dy\right\}\\
&+2(1-2\sigma)(2\pi)^{2\sigma-1}\Im\left\{e^{-2\sigma\pi i}\frac{a(k)\overline{a(l)}}{[k,l]^{2\sigma}}\sigma_{2\sigma-1}(n)e\left(-\frac{\bar{\kappa}n}{\lambda}\right)\int_T^{2T}t^{-2\sigma}dt\int_0^{\infty}\frac{e(-\frac{n}{\kappa\lambda}y)e^{it\log\frac{1+y}{y}}}{y^{1-\sigma}(1+y)^{1-\sigma}\log\frac{1+y}{y}}dy\right\};
\end{aligned}\end{align*}
\begin{equation}\begin{aligned}
&S_{r2}(n,k,l)\\
&=2(2\pi)^{2\sigma-1}\Im\left\{e^{-2\sigma\pi i}\frac{a(k)\overline{a(l)}}{[k,l]^{2\sigma}}\sigma_{2\sigma-1}(n)e\left(\frac{\bar{\kappa}n}{\lambda}\right)\int_T^{2T}O(t^{-1-2\sigma})dt\int_0^{\infty}\frac{e(\frac{n}{\kappa\lambda}y)e^{it\log\frac{1+y}{y}}}{y^{1-\sigma}(1+y)^{1-\sigma}}dy\right\}\\
&+2(2\pi)^{2\sigma-1}\Im\left\{e^{-2\sigma\pi i}\frac{a(k)\overline{a(l)}}{[k,l]^{2\sigma}}\sigma_{2\sigma-1}(n)e\left(-\frac{\bar{\kappa}n}{\lambda}\right)\int_T^{2T}O(t^{-1-2\sigma})dt\int_0^{\infty}\frac{e(-\frac{n}{\kappa\lambda}y)e^{it\log\frac{1+y}{y}}}{y^{1-\sigma}(1+y)^{1-\sigma}}dy\right\}
\label{eq:SR2}
\end{aligned}\end{equation}
and
\begin{equation}\begin{aligned}
S_2(t;n,k,l)=&2(2\pi)^{2\sigma-1}\Im\left\{e^{-2\sigma\pi i}\frac{a(k)\overline{a(l)}}{[k,l]^{2\sigma}}\Delta_{2\sigma-1}\left(X,\frac{\bar{\kappa}}{\lambda}\right)\int_0^{\infty}\frac{e(\frac{n}{\kappa\lambda}y)e^{it\log\frac{1+y}{y}}}{y^{1-\sigma}(1+y)^{1-\sigma}\log\frac{1+y}{y}}dy\right\}\\
&+2(2\pi)^{2\sigma-1}\Im\left\{e^{-2\sigma\pi i}\frac{a(k)\overline{a(l)}}{[k,l]^{2\sigma}}\overline{\Delta_{2\sigma-1}\left(X,\frac{\bar{\kappa}}{\lambda}\right)}\int_0^{\infty}\frac{e(-\frac{n}{\kappa\lambda}y)e^{it\log\frac{1+y}{y}}}{y^{1-\sigma}(1+y)^{1-\sigma}\log\frac{1+y}{y}}dy\right\},
\label{eq:S2}
\end{aligned}\end{equation}
$S_{2r}$ is the term given by replacing $\sigma_{2\sigma-1}(n)e\left(\frac{\bar{\kappa}n}{\lambda}\right)$ by $\Delta_{2\sigma-1}\left(X,\frac{\bar{\kappa}}{\lambda}\right)$ in $-S_{r1}+S_{r2}$.

Using \eqref{eq:g3} we have
\begin{align*}\begin{aligned}
I_3=&\sum_{k\le M}\sum_{l\le M}\Re\left\{(2\pi)^{2\sigma-1}e^{-2\sigma\pi }\frac{a(k)\overline{a(l)}}{[k,l]^{2\sigma}}\right.\\
&\left.\left(\frac{\zeta(2-2\sigma)\lambda^{2\sigma-2}+\frac{\zeta(2\sigma)}{\lambda^{2\sigma}}X^{2\sigma-1}}{-\pi }\kappa\lambda I_{31}(k,l)+\frac{\kappa\zeta(2\sigma)(2\sigma-1)X^{2\sigma-1}}{\pi \lambda^{2\sigma-1}} I_{32}(k,l)\right)\right\},
\end{aligned}\end{align*}
where
\begin{align*}\begin{aligned}
I_{31}(k,l)&=t^{1-2\sigma}\int_0^{\infty}\frac{\sin\left(2\pi Xy/\kappa\lambda\right)}{y^{2-\sigma}(1+y)^{1-\sigma}\log\frac{1+y}{y}}e^{it\log\frac{1+y}{y}}|^{2T}_{t=T}dy\\
&-\int_T^{2T}t^{-2\sigma}dt\int_0^{\infty}\frac{\sin\left(2\pi Xy/\kappa\lambda\right)}{y^{2-\sigma}(1+y)^{1-\sigma}\log\frac{1+y}{y}}e^{it\log\frac{1+y}{y}}dy\\
&+\int_T^{2T}O(t^{-1-2\sigma})dt\int_0^{\infty}\frac{\sin\left(2\pi Xy/\kappa\lambda\right)}{y^{2-\sigma}(1+y)^{1-\sigma}\log\frac{1+y}{y}}e^{it\log\frac{1+y}{y}}dy\\
&=I_{31M}-E_{31}+E_{31}',
\end{aligned}\end{align*}
say, and
\begin{align*}\begin{aligned}
I_{32}(k,l)&=\int_0^{\infty}\frac{\sin(2\pi Xy/\kappa\lambda)}{y}\int_{\sigma+iT}^{\sigma+2iT}\frac{(u-\sigma)^{1-2\sigma}}{u}\left(\frac{1+y}{y}^u\right)du\ dy\\
&+O\left(\int_0^{\infty}\frac{\sin(2\pi Xy/\kappa\lambda)}{y}\int_{\sigma+iT}^{\sigma+2iT}\frac{(u-\sigma)^{-1-2\sigma}}{u}\left(\frac{1+y}{y}^u\right)du\ dy\right)\\
&=I_{32M}+E_{32},
\end{aligned}\end{align*}
say. As for $I_4$, similar to Atkinson's paper, we first carry out the integration with respect to $t$, then put $xy=\eta$, change the order of differentiation and integration and differentiate with respect to $x$, and again put $\eta/x=y$. Changing the variable temporarily to $\eta$ is necessary to ensure the interchange of differentiation and integration. 

The result is that
\begin{align*}\begin{aligned}
I_4=&\sum_{k\le M}\sum_{l\le M}\Re\left\{C\frac{a(k)\overline{a(l)}}{[k,l]^{2\sigma}}\int_T^{2T}{\left\{\frac{\Gamma(1-\sigma-it)}{\Gamma(\sigma-it)}(g_4(\sigma+it)+\overline{g_4(\sigma-it)})\right\}}dt\right\}\\
&=\sum_{k\le M}\sum_{l\le M}S_4(t;k,l)|^{2T}_{t=T},
\end{aligned}\end{align*}
where
$$S_4(t;k,l)=S_{40}-S_{41}+S_{42}+S_{43}-S_{44}+S_{45},$$
with
$$S_{40}=2\Re\left\{C\frac{a(k)\overline{a(l)}}{[k,l]^{2\sigma}}C_1\int_X^{\infty}\frac{\Delta_{2\sigma-1}\left(x,\frac{\bar{\kappa}}{\lambda}\right)}{x}\int_0^{\infty}\frac{e^{2\pi ixy/\kappa\lambda}}{y^{1-\sigma}(1+y)^{2-\sigma}\log((1+y)/y)}t^{2-2\sigma}e^{it\log\frac{1+y}{y}}dydx\right\},$$
$$S_{41}=2\Im\left\{C_2\frac{a(k)\overline{a(l)}}{[k,l]^{2\sigma}}\int_X^{\infty}\frac{\Delta_{2\sigma-1}\left(x,\frac{\bar{\kappa}}{\lambda}\right)}{x}\int_0^{\infty}\frac{e^{2\pi ixy/\kappa\lambda}e^{it\log\frac{1+y}{y}}\left(1-\sigma+\frac{1}{\log((1+y)/y)}\right)t^{1-2\sigma}}{y^{1-\sigma}(1+y)^{2-\sigma}\log((1+y)/y)}dydx\right\},$$
$$S_{42}=2\Im\left\{C_2\frac{a(k)\overline{a(l)}}{[k,l]^{2\sigma}}\int_X^{\infty}\frac{\Delta_{2\sigma-1}\left(x,\frac{\bar{\kappa}}{\lambda}\right)}{x}\int_0^{\infty}\frac{e^{2\pi ixy/\kappa\lambda}e^{it\log\frac{1+y}{y}}\left((1-2\sigma)(1+y)\right)t^{1-2\sigma}}{y^{1-\sigma}(1+y)^{2-\sigma}\log((1+y)/y)}dydx\right\},$$
and $S_{43}$ (resp.$S_{44}$, $S_{45}$)is similar to $S_{40}$ (resp. $S_{41}$, $S_{42}$) with $\Delta_{2\sigma-1}(x,\bar{\kappa}/\lambda)$ and $e^{2\pi ixy/\kappa\lambda}$ replaced by their complex conjugates.
We decompose
$$S_{40}|^{2T}_{t=T}=J_1(k,l)+J_2(k,l)-J_3(k,l),$$
where
$$J_1(k,l)=2\Re\left\{C\frac{a(k)\overline{a(l)}}{[k,l]^{2\sigma}}C_1\int_{2X}^{\infty}\frac{\Delta_{2\sigma-1}\left(x,\frac{\bar{\kappa}}{\lambda}\right)}{x}\int_0^{\infty}\frac{e^{2\pi ixy/\kappa\lambda}e^{i2T\log\frac{1+y}{y}}}{y^{1-\sigma}(1+y)^{2-\sigma}\log((1+y)/y)}(2T)^{2-2\sigma}dydx\right\},$$
$$J_2(k,l)=2\Re\left\{C\frac{a(k)\overline{a(l)}}{[k,l]^{2\sigma}}C_1\int_X^{2X}\frac{\Delta_{2\sigma-1}\left(x,\frac{\bar{\kappa}}{\lambda}\right)}{x}\int_0^{\infty}\frac{e^{2\pi ixy/\kappa\lambda}e^{2iT\log\frac{1+y}{y}}}{y^{1-\sigma}(1+y)^{2-\sigma}\log((1+y)/y)}(2T)^{2-2\sigma}dydx\right\},$$
and 
\begin{equation}\begin{aligned}J_3(k,l)=2\Re\left\{C\frac{a(k)\overline{a(l)}}{[k,l]^{2\sigma}}C_1\int_X^{\infty}\frac{\Delta_{2\sigma-1}\left(x,\frac{\bar{\kappa}}{\lambda}\right)}{x}\int_0^{\infty}\frac{e^{2\pi ixy/\kappa\lambda}e^{iT\log\frac{1+y}{y}}}{y^{1-\sigma}(1+y)^{2-\sigma}\log((1+y)/y)}T^{2-2\sigma}dydx\right\}.\label{eq:j3}\end{aligned}\end{equation}
Therefore 
\begin{equation*}\begin{aligned}
I_4=\sum_{k\le M}\sum_{l\le M}(J_1(k,l)+J_2(k,l)-J_3(k,l))+\sum_{k\le M}\sum_{l\le M}(-S_{41}+S_{42}+S_{43}-S_{44}+S_{45})|^{2T}_{t=T}.
\end{aligned}\end{equation*}

In order to evaluate the exponential integrals appearing in the above formulas, we use the lemma from Atkinson:

\begin{lemma}(\cite[Lemma 1]{A})
Let $f(z)$, $\phi(z)$ be two functions of the complex variable $z$, and $(a,b)$ a real interval, such that \\
\begin{enumerate}[(i)]
    \item For $a\le x\le b$, $f(x)$ is real and $f''(x)>0$.
    \item For a  certain positive differentiable function $\mu(x)$, defined in $(a,b)$, $f(z)$ and $\phi(z)$ are analytic for $$a\le x\le b,\ |z-x|\le \mu(x)$$
    \item There are positive functions $F(x)$, $\Phi(x)$ defined in $(a,b)$ such that 
$$\phi(z)=O\{\Phi(x)\},\ f'(z)=O\{F(x)\mu^{-1}(x)\},\ \{f''(z)\}^{-1}=O\{(\mu(x))^2(F(x))^{-1}\}$$
for 
$$a\le x\le b,\ |z-x|\le \mu(x)$$
the constants implied in these order results being absolute.
\end{enumerate}

Let $k$ be any real number. and if $f'(x)+k$ has a zero in $(a,b)$ denote it by $x_0$. Let the values of $f(x)$, $\phi(x)$, etc., at $a$, $x_0$, $b$ be characterised by the suffixes $a$, $0$, and $b$ respectively. Then
\begin{align*}
    \int_a^b\phi(x)&\exp 2\pi i\{f(x)+kx\}dx=\phi_0{f''_0}^{-1/2} e^{2\pi i\{f_0+kx_0\}+\frac{1}{2}\pi i}\\
    &+O\left\{\int_a^b\Phi(x)\exp\{-A|k|\mu(x)-AF(x)\}(dx+|d\mu(x)|)\right\}\\
   &+O\left(\Phi_0 \mu_0 {F_0}^{-3/2}\right)+O\left(\frac{\Phi_a}{|f'_a+k|+{f''_a}^{1/2}}\right)+O\left(\frac{\Phi_b}{|f'_b+k|+{f''_b}^{1/2}}\right).
\end{align*}
If $f'(x)+k$ has no zero in $(a,b)$ then the terms involving $x_0$ are to be omitted.
\label{lem-9}
\end{lemma}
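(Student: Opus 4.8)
The plan is to reproduce the classical one‑dimensional saddle‑point (contour‑deformation) argument of Atkinson; I indicate only its structure, the full bookkeeping being that of \cite[Lemma 1]{A}. Writing $\psi(z)=f(z)+kz$, so that $\psi'(x_0)=0$ at the zero $x_0$ of $f'+k$ and $\psi''=f''$, the object of study is $\int_a^b\phi(x)e^{2\pi i\psi(x)}\,dx$. First I would split $(a,b)$ into a short arc around the saddle $x_0$ and the two remaining pieces, handling the latter by pushing the contour off the real axis and the former by local steepest descent.

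For the part of the integral away from $x_0$: by hypothesis~(ii) the functions $f,\phi$ continue analytically to the tube $\{\,|z-x|\le\mu(x)\,\}$, and hypothesis~(iii) controls $\psi'$ and $(\psi'')^{-1}$ there. Displacing the contour off $\mathbb{R}$ by an amount comparable to $\mu(x)$, in the half‑plane dictated by the sign of $f'(x)+k$, the convexity~(i) together with these bounds gives $\Re\bigl(2\pi i\psi(z)\bigr)\le -A|k|\mu(x)-AF(x)$ on the displaced path for an absolute $A>0$; estimating $|\phi|\ll\Phi(x)$ against this exponential decay, and accounting for the vertical connecting segments through $|d\mu(x)|$, yields the error term $O\bigl\{\int_a^b\Phi(x)\exp(-A|k|\mu(x)-AF(x))(dx+|d\mu(x)|)\bigr\}$.

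Next I would treat the arc near $x_0$. Expanding $\psi(x)=\psi(x_0)+\tfrac12 f''(x_0)(x-x_0)^2+O\bigl(F(x_0)\mu(x_0)^{-3}|x-x_0|^3\bigr)$ and replacing $\phi$ by $\phi_0$, I would deform this arc onto the steepest‑descent line through $x_0$ — which, since $f''(x_0)>0$, meets $\mathbb{R}$ at angle $\pi/4$ — and compute the resulting Gaussian integral, obtaining $\phi_0\,(f_0'')^{-1/2}$ times the standard unimodular Gaussian phase; this produces the main term $\phi_0 (f_0'')^{-1/2}e^{2\pi i(f_0+kx_0)}$ up to that phase, the cost of truncating the Taylor expansion and of replacing $\phi$ by $\phi_0$ being $O\bigl(\Phi_0\mu_0 F_0^{-3/2}\bigr)$ after inserting the bound $(f_0'')^{-1}\ll\mu_0^2 F_0^{-1}$ from~(iii). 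At the two endpoints $a,b$ there is no stationary point, so one integration by parts transfers a factor $(\psi')^{-1}$ onto $\phi$; estimating trivially and using $(f'')^{1/2}$ as the natural lower floor for $|\psi'|$ gives $O\bigl(\Phi_a/(|f'_a+k|+(f''_a)^{1/2})\bigr)$ and its analogue at $b$. Collecting the three contributions proves the formula, and when $f'+k$ has no zero in $(a,b)$ the saddle term and its error simply drop out.

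The step I expect to be the main obstacle is the one common to all estimates of this type: producing a single admissible contour that simultaneously (a) never leaves the analyticity tube $|z-x|\le\mu(x)$, (b) realises the claimed exponential decay of $|e^{2\pi i\psi(z)}|$ uniformly in all the parameters, and (c) joins smoothly onto the local steepest‑descent arc at $x_0$ — with the borderline regime in which $x_0$ lies within $O(\mu_0)$ of an endpoint requiring extra care so that the saddle and endpoint contributions are not double‑counted. That balancing is exactly what is carried out in \cite[Lemma 1]{A}, whose statement we have transcribed above.
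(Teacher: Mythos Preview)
The paper does not supply its own proof of this lemma: it is simply quoted verbatim as \cite[Lemma~1]{A} and then applied. Your sketch reproduces the classical contour-deformation/steepest-descent argument that Atkinson himself gives, so there is nothing to compare against in the present paper --- your approach \emph{is} the cited one, and the honest difficulties you flag (building a single admissible contour inside the analyticity tube, handling a saddle near an endpoint) are precisely those that Atkinson works out in detail in \cite{A}.
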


\begin{lemma}(\cite[Lemma 2]{A} )\\
Let $\alpha$, $\beta$, $\gamma$, $a$, $b$, $k$, $T$ be real numbers such that $\alpha$, $\beta$, $\gamma$ are positive and bounded, $\alpha\neq1$, $0<a<1/2$, $a<T/(8\pi k)$, $k>0$, $T\ge 1$, and $$b\ge \max\left\{T,\frac{1}{k},-\frac{1}{2}+\sqrt{\frac{1}{4}+\frac{T}{2\pi k}}\right\}.$$
Then,
\begin{equation}\begin{aligned}
\int_a^b\frac{\exp(\pm i\left(T\log((1+y)/y)+2\pi ky)\right)}{y^{\alpha}(1+y)^{\beta}{\log^{\gamma}{((1+y)/y)}}}dy=\frac{T^{1/2}\exp\left(i(TV+2\pi kU-\pi k+\frac{\pi}{4})\right)}{2k\pi^{1/2}V^{\gamma}U^{1/2}(U-\frac{1}{2})^{\alpha}(U+\frac{1}{2})^{\beta}}\\
+O\left(\frac{a^{1-\alpha}}{T}\right)+O\left(\frac{b^{\gamma-\alpha-\beta}}{k}\right)+R(T,k)+O(e^{-CT})\\
+O(e^{-c\sqrt{kT}}(T^{\gamma-\alpha-\beta}+T))+O(k^{-1}e^{-CkT}(b^{\gamma-\alpha-\beta}+T^{\gamma-\alpha-\beta}))
\label{3}
\end{aligned}\end{equation}
uniformly for $|\alpha-1|>\varepsilon$, where $C$ is an absolute constant,
$$U=\sqrt{\frac{1}{4}+\frac{T}{2\pi k}},\ \ \ \ V=2\arcsinh{\sqrt{\frac{\pi k}{2T}}},$$
\begin{equation*}
R(T,k)\ll\left\{
\begin{array}{lr}
T^{(\gamma-\alpha-\beta)/2-1/4}k^{-(\gamma-\alpha-\beta)/2-5/4}\ \ \  &if\ 0<k\le T,  \\
T^{-1/2-\alpha}k^{\alpha-1}\ \ \ \ \ &if\ k\ge T.
\end{array}
\right.
\end{equation*}
The term $O(e^{-CT})$ is to be omitted if $1<\alpha$ or $0<\alpha<1$ with $k\ge T$. When we replace $k$ by $-k$ on the left-hand side of \eqref{3}, then the explicit term and $R(T,k)$ for $k\le T$ on the right-hand side are to be omitted. And if we replace the assumption $k>0$ by $k$ is larger than an absolute constant in $(0,1]$, then the last three error terms on the right-hand side of \eqref{3} are absorbed into the other error terms. When $k\le B$ and $k$ on the left-hand side of \eqref{3} is replaced by $-k$, then the explicit term and the last three error terms on the right-hand side of \eqref{3} are to be omitted.
\label{lem-2}
\end{lemma}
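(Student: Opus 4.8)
The plan is to obtain \eqref{3} directly from Atkinson's stationary phase estimate, Lemma~\ref{lem-9}, applied with
\[ f(y)=\frac{T}{2\pi}\log\frac{1+y}{y},\qquad \phi(y)=\frac{1}{y^{\alpha}(1+y)^{\beta}\log^{\gamma}((1+y)/y)}, \]
and with the linear parameter of Lemma~\ref{lem-9} equal to $k$, so that $\exp 2\pi i\{f(y)+ky\}$ reproduces the integrand of \eqref{3} for the ``$+$'' sign; the ``$-$'' sign then follows by complex conjugation, and replacing $k$ by $-k$ turns the phase into one with no stationary point. First I would record $f'(y)=-\frac{T}{2\pi y(1+y)}$ and $f''(y)=\frac{T(1+2y)}{2\pi(y(1+y))^{2}}>0$, so $f$ is convex on $(0,\infty)$ and $f'(y)+k=0$ has the unique root $y_{0}=-\tfrac12+\sqrt{\tfrac14+\tfrac{T}{2\pi k}}=U-\tfrac12$; the hypotheses $a<T/(8\pi k)$ and $b\ge-\tfrac12+\sqrt{\tfrac14+\tfrac{T}{2\pi k}}$ are exactly what place $y_{0}$ inside $(a,b)$, while for the ``$+$'' sign with $k$ replaced by $-k$ one has $f'(y)-k<0$ throughout $(0,\infty)$, so the ``$x_{0}$''-terms of Lemma~\ref{lem-9} are absent — this is the source of the clauses about omitting the explicit term and $R(T,k)$ for $k\le T$.

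Next I would evaluate the data of Lemma~\ref{lem-9} at $y_{0}$. From $y_{0}(1+y_{0})=\tfrac{T}{2\pi k}=U^{2}-\tfrac14$ we get $1+y_{0}=U+\tfrac12$, and from
\[ \sqrt{\tfrac{1+y_{0}}{y_{0}}}-\sqrt{\tfrac{y_{0}}{1+y_{0}}}=\frac{1}{\sqrt{y_{0}(1+y_{0})}}=\sqrt{\tfrac{2\pi k}{T}} \]
one solves the resulting quadratic for $w=\sqrt{(1+y_{0})/y_{0}}$ and finds $\log((1+y_{0})/y_{0})=2\arcsinh\sqrt{\tfrac{\pi k}{2T}}=V$. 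Hence $f(y_{0})=\tfrac{TV}{2\pi}$, $\phi(y_{0})=(U-\tfrac12)^{-\alpha}(U+\tfrac12)^{-\beta}V^{-\gamma}$ and $f''(y_{0})=\tfrac{4\pi k^{2}U}{T}$, so $\phi(y_{0})f''(y_{0})^{-1/2}$ produces exactly the explicit coefficient in \eqref{3} while $2\pi(f(y_{0})+ky_{0})=TV+2\pi kU-\pi k$ produces the displayed phase. The exceptions ``$1<\alpha$ or $0<\alpha<1$ with $k\ge T$'' are precisely the cases in which the $O(e^{-CT})$-piece coming from the left endpoint of the integral term of Lemma~\ref{lem-9} is not generated.

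It then remains to fix the auxiliary functions and bound the four error terms of Lemma~\ref{lem-9}. I would take $\mu(y)=c\,y$ for a small absolute constant $c$, $F(y)=c'T/(1+y)$ and $\Phi(y)=\phi(y)$: since the only singularities of $f$ and $\phi$ lie in $[-1,0]$, both are holomorphic on $|z-y|\le\mu(y)$, where $|f'(z)|\asymp T/(y(1+y))$ and $|f''(z)|\asymp T/(y^{2}(1+y))$, so the three hypotheses of Lemma~\ref{lem-9} hold with absolute constants when $|\alpha-1|>\varepsilon$. The right-endpoint term is $\asymp\Phi_{b}/k\asymp b^{\gamma-\alpha-\beta}/k$ (using $b(1+b)\ge\tfrac{T}{2\pi k}$ and $b\ge\max\{T,1/k\}$ to see $|f'_{b}+k|+\sqrt{f''_{b}}\asymp k$); the left-endpoint term is $\asymp\Phi_{a}\,a/T\asymp a^{1-\alpha}/T$ (using $a<T/(8\pi k)$ and $T\ge1$ to see $|f'_{a}+k|+\sqrt{f''_{a}}\asymp T/a$, the slowly varying factor $\log^{-\gamma}((1+a)/a)$ being harmless for $0<a<\tfrac12$); and $\Phi_{0}\mu_{0}F_{0}^{-3/2}\asymp(U-\tfrac12)^{1-\alpha}(U+\tfrac12)^{3/2-\beta}V^{-\gamma}T^{-3/2}$, which — on simplifying $U,V$ separately in $k\le T$ (where $U\asymp\sqrt{T/k}$, $V\asymp\sqrt{k/T}$) and in $k\ge T$ (where $U\asymp1$, $U-\tfrac12\asymp T/k$, $V\asymp\log(k/T)$) — is exactly $R(T,k)$. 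Finally I would split the integral error term $\int_{a}^{b}\Phi(y)e^{-A|k|\mu(y)-AF(y)}(dy+|d\mu(y)|)$ at $y\asymp1$ and at $y\asymp T$: near $y=0$ one has $F(y)\asymp T$, giving the $O(e^{-CT})$-decay; in the middle range one balances $|k|\mu(y)\asymp ky$ against $F(y)\asymp T/y$, whose minimum $\asymp\sqrt{kT}$ yields $O(e^{-c\sqrt{kT}}(T^{\gamma-\alpha-\beta}+T))$ after estimating the remaining convergent integral; and for $y\gtrsim T$ one uses $|k|\mu(y)\gtrsim kT$ to get $O(k^{-1}e^{-CkT}(b^{\gamma-\alpha-\beta}+T^{\gamma-\alpha-\beta}))$. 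Tracking all of this through $0<k\le T$ versus $k\ge T$, noting that for $k$ bounded and in $(0,1]$ the exponential pieces are absorbed into $O(b^{\gamma-\alpha-\beta}/k)$, and re-running the bookkeeping in the $k\mapsto-k$ case (no stationary point, so only the endpoint contributions survive), yields all the stated refinements.

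The main obstacle is not any individual estimate but the \emph{uniformity bookkeeping}: one must verify the three hypotheses of Lemma~\ref{lem-9} with implied constants independent of $\alpha,\beta,\gamma$ and of $k>0$, and then carry each of its four error terms through both regimes $k\le T$ and $k\ge T$ and through the sign change, checking that the exceptional clauses (the omitted explicit term, the omitted $R(T,k)$, the absorbed exponential terms) line up correctly. A secondary delicate point is the choice of $\mu(y)$: it has to be comparable to $y$ near the origin (to stay off the branch cut while still dominating in the $f''$-estimate) yet be allowed to grow like $y$ for large $y$ (to generate the $e^{-CkT}$ tail), and one must check that $|d\mu(y)|\asymp dy$ so that the integral error term of Lemma~\ref{lem-9} is not worsened.
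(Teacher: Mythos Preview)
The paper does not give its own proof of this lemma; it is quoted as \cite[Lemma~2]{A} and used as a black box, so there is nothing to compare against in the paper itself. Your sketch---applying Lemma~\ref{lem-9} with $f(y)=\tfrac{T}{2\pi}\log\tfrac{1+y}{y}$, $\phi(y)=y^{-\alpha}(1+y)^{-\beta}\log^{-\gamma}\tfrac{1+y}{y}$, locating the stationary point at $y_0=U-\tfrac12$, and choosing $\mu(y)\asymp y$, $F(y)\asymp T/(1+y)$---is exactly Atkinson's own derivation of his Lemma~2 from his Lemma~1, and the bookkeeping you outline (endpoint terms giving $a^{1-\alpha}/T$ and $b^{\gamma-\alpha-\beta}/k$, the $\Phi_0\mu_0F_0^{-3/2}$ term giving $R(T,k)$, and the split of the exponential integral producing the three $e^{-C\cdots}$ tails) matches his argument.
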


\begin{lemma}
For $\alpha,\ k,\ T\in \mathbf{R}$, then
\begin{equation*}
\begin{aligned}
\int_T^{2T}\frac{\exp -i\left(2x \arcsinh{\sqrt{\frac{\pi k}{2x}}}+2\pi k\sqrt{\frac{1}{4}+\frac{x}{2\pi k}}-\pi k+\frac{\pi}{4}\right)}{x^{\alpha}\cdot\arcsinh{\sqrt{\frac{\pi k}{2x}}}\cdot\left(\frac{1}{4}+\frac{x}{2\pi k}\right)^{1/4}}dx=O(T^{-\alpha+3/4}).
\end{aligned}\end{equation*}

\label{lem-3}
\end{lemma}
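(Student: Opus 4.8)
The plan is to read the integral as an oscillatory integral $\int_T^{2T}\varphi(x)e^{-i\Psi(x)}\,dx$ with
$$\Psi(x)=2x\arcsinh\sqrt{\tfrac{\pi k}{2x}}+2\pi k\sqrt{\tfrac14+\tfrac{x}{2\pi k}}-\pi k+\tfrac{\pi}{4},\qquad \varphi(x)=x^{-\alpha}\Big(\arcsinh\sqrt{\tfrac{\pi k}{2x}}\Big)^{-1}\Big(\tfrac14+\tfrac{x}{2\pi k}\Big)^{-1/4};$$
note $k>0$ is forced by $\sqrt{\pi k/2x}$, and (as the estimate will show) $T\ge1$, with the implied constant allowed to depend on $\alpha$ and $k$ — it grows like $k^{-3/4}$ as $k\to0^+$, which is the source of the $M$-dependence in the error terms of Theorems \ref{thm1}--\ref{thm2}. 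The decisive observation is the identity
$$\Psi'(x)=2\arcsinh\sqrt{\tfrac{\pi k}{2x}},$$
which is exactly the derivative relation underlying Atkinson's Lemma \ref{lem-2} (there $\Psi$ is the saddle phase $tV+2\pi kU-\pi k+\pi/4$, with $t=x$). I would verify it by direct differentiation: writing $w=w(x)=\sqrt{\pi k/2x}$ one has $w'=-w/(2x)$ and $\tfrac14+\tfrac{x}{2\pi k}=(1+w^2)/(4w^2)$, so the $-w/\sqrt{1+w^2}$ produced by differentiating the first term and the $+w/\sqrt{1+w^2}$ produced by the second cancel, leaving $\Psi'=2\arcsinh w>0$. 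Thus $\Psi'$ is positive (and monotone) throughout $[T,2T]$: there is no stationary point and hence no main term, so we are in the ``no $x_0$'' case of Lemma \ref{lem-9}, which I would realize simply by one integration by parts.

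That integration by parts gives
$$\Big|\int_T^{2T}\varphi e^{-i\Psi}\,dx\Big|\ \le\ 2\max_{[T,2T]}\Big|\frac{\varphi}{\Psi'}\Big|+\int_T^{2T}\Big|\frac{d}{dx}\frac{\varphi}{\Psi'}\Big|\,dx,$$
and for the last integral the plan is to show that $\varphi$ and $\Psi'$ have logarithmic derivative $O(1/x)$ on $[T,2T]$. For $\Psi'$ this follows from $\Psi''=-w/(x\sqrt{1+w^2})$ and the elementary inequality $\arcsinh w\ge w/\sqrt{1+w^2}$ ($w\ge0$), giving $|\Psi''/\Psi'|\le 1/(2x)$; for $\varphi$ it follows from the same inequality together with $\frac{d}{dx}\log(\tfrac14+\tfrac{x}{2\pi k})=\tfrac{1}{\pi k/2+x}\le 1/x$. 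Hence $\big|\tfrac{d}{dx}(\varphi/\Psi')\big|=\big|\varphi/\Psi'\big|\,\big|\varphi'/\varphi-\Psi''/\Psi'\big|\ll_\alpha x^{-1}\big|\varphi/\Psi'\big|$, and since $\int_T^{2T}x^{-1}\,dx=\log2$ the whole right-hand side above is $\ll_\alpha\max_{[T,2T]}|\varphi/\Psi'|$.

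It remains to bound $\varphi/\Psi'=\tfrac12 x^{-\alpha}(\arcsinh w)^{-2}\big(\tfrac14+\tfrac{x}{2\pi k}\big)^{-1/4}$. Using $(\arcsinh w)^{-2}\le(1+w^2)/w^2$ and $\tfrac14+\tfrac{x}{2\pi k}=(1+w^2)/(4w^2)$, the awkward pair of factors collapses:
$$(\arcsinh w)^{-2}\Big(\tfrac14+\tfrac{x}{2\pi k}\Big)^{-1/4}\ \le\ \sqrt2\,\frac{(1+w^2)^{3/4}}{w^{3/2}}\ =\ \sqrt2\,(1+w^{-2})^{3/4}\ =\ \sqrt2\,\Big(1+\tfrac{2x}{\pi k}\Big)^{3/4}.$$
For $x\ge1$ we have $1+2x/\pi k\le x(1+2/\pi k)$, so $|\varphi/\Psi'|\le\tfrac{1}{\sqrt2}(1+2/\pi k)^{3/4}x^{3/4-\alpha}$, and on $[T,2T]$ with $T\ge1$ this is $\ll_{\alpha,k}T^{3/4-\alpha}$; combining with the previous paragraph yields $\int_T^{2T}\varphi e^{-i\Psi}\,dx\ll_{\alpha,k}T^{-\alpha+3/4}$. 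I expect the one genuine obstacle to be exactly this last step of bookkeeping: $\varphi$ appears to blow up where $\arcsinh\sqrt{\pi k/2x}$ is small, but $\Psi'$ is small there for the same reason, so the factor of $\Psi'$ gained from integration by parts compensates precisely — the displayed collapse is what makes this quantitative.
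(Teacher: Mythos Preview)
Your argument is correct. The paper proceeds by invoking Atkinson's Lemma~\ref{lem-9} with $\Phi(x)=x^{-\alpha+1/4}$, $\mu(x)=x/2$, $F(x)=\sqrt{x}$, notes that $f'(T)=O(T^{-1/2})$ and $f''(T)=O(T^{-3/2})$, and concludes from $f''>0$, $\lim_{x\to\infty}f'(x)=0^{-}$ that there is no stationary point on $[T,2T]$, so only the endpoint error term $O(\Phi_a/(|f'_a|+{f''_a}^{1/2}))=O(T^{-\alpha+3/4})$ survives. Your approach is the same in spirit---no saddle, hence a first-derivative bound---but realized more elementarily: you integrate by parts directly and make explicit the clean identity $\Psi'(x)=2\arcsinh\sqrt{\pi k/2x}$, which in the paper's $f'$ remains hidden among three mutually cancelling terms. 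What you gain is a self-contained proof that bypasses the machinery of Lemma~\ref{lem-9} and makes transparent why the apparent blow-up of $\varphi$ from the $\arcsinh^{-1}$ factor is exactly compensated by the extra $1/\Psi'$ produced by integration by parts; what the paper's route gains is brevity once the lemma is in hand.
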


\begin{proof}(proof of Lemma \ref{lem-3} )
    We apply Lemma \ref{lem-9} with $a=T$, $b=2T$ and $$\phi(x)=x^{-\alpha}\left({\arcsinh\sqrt{\frac{\pi k}{2x}}}\right)^{-1}\left(\frac{1}{4}+\frac{x}{2\pi k}\right)^{-1/4},$$ $$f(x)=-2x \arcsinh{\sqrt{\frac{\pi k}{2x}}}-2\pi k\sqrt{\frac{1}{4}+\frac{x}{2\pi k}}+\pi k-\frac{\pi}{4}.$$

    We have then 
    $$f'(x)=-2\arcsinh\sqrt{\frac{\pi k}{2x}}+\frac{\sqrt{\pi k}}{\sqrt{{\pi k}+2x}}-\frac{1}{2}\left(\frac{1}{4}+\frac{x}{2\pi k}\right)^{-1/2},$$
    $$f''(x)=\frac{\sqrt{\pi k}(x+\pi k)}{\sqrt{2x+\pi k}(2x^2+\pi k x)}+\frac{1}{8\pi k}\left(\frac{1}{4}+\frac{x}{2\pi k}\right)^{-3/2},$$
    
   since $f'(T)=O(T^{-1/2})$ and $f''(T)=O(T^{-3/2})$, we have $$O\left\{\frac{1}{(|f'_T|+{f''_T}^{1/2})}\right\}=O\left(\frac{1}{T^{-1/2}+T^{-3/4}}\right)=O(T^{1/2})$$
   and $$\arcsinh\left(\sqrt{\frac{\pi k}{2x}}\right)=O(x^{-1/2}).$$
    
    So that we may take $$\Phi(x)=x^{-\alpha+1/4},\ \mu(x)=x/2,\ F(x)=\sqrt{x},$$
    then $O\left(\frac{\Phi_a}{|f'_a+k|+{f''_a}^{1/2}}\right)=O(T^{-\alpha+1/4}\cdot T^{1/2}).$

    Since $f''(x)>0$ for $x>0$ and $\lim_{x\to\infty}f'(x)\to 0^-$, so $f'(x)$ has no zero inside the area $(T,2T)$, and the main term and second error term can be omitted. Moreover, the first error term is also to be omitted.

\end{proof}

\section{Evaluation of $I_1$, $I_2$ and $I_3$}
We apply Lemma \ref{lem-2} to  $I_1$, $I_2$ and $I_3$ in this section.

For $I_1$, firstly, we can see that $S_1(t;n,k,l)|^{2T}_{t=T}$ is absolutely convergent, by
\begin{equation}\begin{aligned}\label{1}
&\int_0^{\infty}\frac{e^{i2\pi ny/q}[\exp(i2T\log{((1+y)/y)})-\exp(iT\log{((1+y)/y)})]}{y^{1-\sigma}(1+y)^{1-\sigma}\log{((1+y)/y)}}dy\\
&=\int_0^{\infty}\frac{e^{i2\pi ny/q}e^{\frac{3}{2}Ti\log{((1+y)/y)}}\left[e^{i\frac{T}{2}\log{((1+y)/y)}}-e^{-i\frac{T}{2}\log{((1+y)/y)}}\right]}{y^{1-\sigma}(1+y)^{1-\sigma}\log{((1+y)/y)}}dy\\
&=\int_0^{\infty}\frac{e^{i2\pi ny/q}e^{\frac{3}{2}Ti\log{((1+y)/y)}}\left[2i\sin\left(\frac{T}{2}\log\frac{1+y}{y}\right)\right]}{y^{1-\sigma}(1+y)^{1-\sigma}\log{((1+y)/y)}}dy\\
&\ll O\left(\int_0^{\infty}\frac{e^{i2\pi ny/q}e^{\frac{3}{2}Ti\log{((1+y)/y)}}\left[2\left(\frac{T}{2}\log\frac{1+y}{y}\right)\right]}{y^{1-\sigma}(1+y)^{1-\sigma}\log{((1+y)/y)}}dy\right)<_T\infty\ (by\ Lemma \ref{lem-2}).
\end{aligned}\end{equation}
Hence, for a fixed large $b$, applying Lemma \ref{lem-2}, 
\begin{equation}\begin{aligned}\label{2}
&\int_0^{b}\frac{e^{i2\pi ny/\kappa\lambda}\exp(it\log{((1+y)/y)})}{y^{1-\sigma}(1+y)^{1-\sigma}\log{((1+y)/y)}}dy
=\frac{T^{1/2}\exp\left(i(TV+2\pi \frac{n}{\kappa\lambda}U-\pi n/\kappa\lambda+\frac{\pi}{4})\right)}{2\frac{n}{\kappa\lambda}\pi^{1/2}VU^{1/2}(U^2-{\frac{1}{2}}^2)^{1-\sigma}}\\
&+O\left(\frac{b^{2\sigma-1}}{\frac{n}{\kappa\lambda}}\right)+O\left(T^{\sigma-3/4}{\left(\frac{n}{\kappa\lambda}\right)}^{-\sigma-3/4}\right).
\end{aligned}\end{equation}
So, by \eqref{eq:SumI1} we can write $I_1$ with
\begin{align*}\begin{aligned}
I_1+S_{r1}-S_{r2}&=\sum_{k\le M}\sum_{l\le M}\sum_{n\le 2X}S_1(2T;n,k,l)\\
&-\sum_{k\le M}\sum_{l\le M}\sum_{X< n\le 2X}S_1(2T;n,k,l)-\sum_{k\le M}\sum_{l\le M}\sum_{n\le X}S_1(T;n,k,l)\\
&=I_{11}-I_{12}-I_{13}
\end{aligned}\end{align*}
above \eqref{1} and \eqref{2}, for $A\kappa\lambda T<X<A'\kappa\lambda T$, we obtain
\begin{equation}\begin{aligned}
I_{13}=\Sigma_1(T,Y)+O\left(T^{1/4-\sigma}C\sum_{k\le M}\sum_{l\le M}\frac{|a(k)\overline{a(l)}|}{[k,l]^{2\sigma}}(\kappa\lambda)^{\sigma+3/4}\right),
\label{59}
\end{aligned}\end{equation}
where \begin{align*}\begin{aligned}
&\Sigma_1(T,Y)=\sum_{k,l\le M}\sum_{n\le \kappa\lambda Y}\Im\left\{C\frac{a(k)\overline{a(l)}}{[k,l]^{2\sigma}}(\kappa\lambda)^{\sigma}\frac{\sigma_{2\sigma-1}(n)}{2^{\sigma+1}n^{\sigma}\pi^{\sigma-1/2}}e^{2\pi in\bar{\kappa}/\lambda}T^{1/2-\sigma}\right.\\
&\times\left.\left(\arcsinh{\sqrt{\frac{\pi n}{2T\kappa\lambda}}}\right)^{-1}\left(\frac{1}{4}+\frac{T\kappa\lambda}{2\pi n}\right)^{-1/4}\exp\left(i\left(f\left(T,\frac{n}{\kappa\lambda}\right)-\pi n/\kappa\lambda+\pi/2\right)\right) \right\},
\end{aligned}\end{align*}
here$$f(T,u)=2T\arcsinh{\sqrt{\frac{\pi u}{2T}}}+\sqrt{2\pi u T+\pi^2u^2}-\frac{\pi}{4}$$
Replacing $\Sigma_1(T,Y)$ to $\Sigma_1(2T,2Y)$ we have the same type formula holds for $I_{11}$.

For $S_{r2}$, by Lemma \ref{lem-2}, note that the order of the integral for $y$ in the double integral in \eqref{eq:SR2} is $O(T^{-3/4+\sigma})$. So the double integral in $S_{r2}$ is $O(T^{-3/4-\sigma})$, it is absorbed by the error term of \eqref{59} for $\sigma>1/4$.

On the double integral in $I_{r1}$, by using Lemma \ref{lem-2} we have
$$I_{r1}=C(\kappa,\lambda,n,\pi)\overline{\int_T^{2T} x^{-2\sigma}x^{1/2}x^{\sigma-1}\frac{\exp -i\left(2x \arcsinh{\sqrt{\frac{\pi m}{2x}}}+2\pi m\sqrt{\frac{1}{4}+\frac{x}{2\pi m}}-\pi m+\frac{\pi}{4}\right)}{\arcsinh{\sqrt{\frac{\pi m}{2x}}}\times\left(\frac{1}{4}+\frac{x}{2\pi m}\right)^{1/4}}dx},$$
where $C(\kappa,\lambda,n,\pi)$ is a constant and $m=\frac{n}{\kappa\lambda}$.

By Lemma \ref{lem-3}, the double integral in $I_{r1}$ is $O_{\kappa,\lambda}(T^{1/4-\sigma})$.

Next, applying \eqref{O:d} and Lemma\ref{lem-2} to \eqref{eq:S2}, we obtain
\begin{align*}
I_2\ll\sum_{k\le M}\sum_{l\le M}\frac{|Ca(k)\overline{a(l)}|}{[k,l]^{2\sigma}}\kappa^{2\sigma-1+\frac{1}{5-2\sigma}+\varepsilon}\lambda^{2\sigma-1+\frac{4-2\sigma}{5-2\sigma}+\varepsilon}T^{2\sigma-3/2+1/(5-4\sigma)+\varepsilon} .
\end{align*}
For $I_{31M}$, we can divide the integral into two parts: $(0,y)$ and $[y,\infty),\ y=\frac{1}{2x}$, which we denote by $I_{31M}^{'}$ and $I_{31M}^{''}$.
We have
$$I_{31M}^{'}=\int_0^y \frac{\sin(T\log(\frac{y+1}{y}))}{y(1+y)}\frac{y^{\sigma}(1+y)^{\sigma}}{\log(\frac{y+1}{y})}\frac{\sin(2\pi Xy/\kappa\lambda)}{y}dy.$$
Using the second mean-value Theorem twice and $\eta\sim x^{-1},\ x\sim T$, we obtain $$I_{31M}^{'}\ll O(T^{-\sigma}).$$
 By Atkinson's lemma, we have$$I_{31M}^{''}\ll O(T^{-1/2}).$$
Combining the estimates for $I_{31M}'$ and $I_{31M}''$, we can obtain an estimate for $E_{31}$. And $E_{31}'$ can be estimated by the same way as the estimate for $S_{r2}$. They are all absorbed into $I_{31M}$.

About $I_{32M}$, we have
$$I_{32M}=\int_0^{\infty}\frac{\sin(\frac{2\pi Xy}{\kappa\lambda})}{y^{2-2\sigma}}\int_{\sigma+iT}^{\sigma+2iT}u^{-1}\left(\frac{1+y}{y}\right)^{u}t^{1-2\sigma}dtdy.$$
In Atkinson[1] the integration with respect to $u$ is from $1/2-iT$ to $1/2+iT$, and the main term appears from the residue at $u = 0$ when the path of integration is deformed. In the present situation, however, the integration with respect to $u$ is from $\sigma + iT$ to $\sigma+ 2iT$, hence the residue does not appear. We obtain
$$I_{32M}\ll O(T^{-\sigma}).$$
Similarly we can obtain $E_{32}$ is absorbed by $I_{32M}$.

We obtain
$$I_3\ll \sum_{k\le M}\sum_{l\le M}\frac{|a(k)\bar{a(l)}|}{[k,l]^{2\sigma}}\kappa\lambda^{2\sigma-1}T^{-\sigma}\ll O(M^{1-2\sigma+\varepsilon}T^{-\sigma}).$$
Next, we find that the error term on the right-hand side of \eqref{59} is $O(M^{1-2\sigma+\varepsilon}T^{1/4-\sigma})$. Similarly, we have
$$I_2\ll O(M^{1-2\sigma+\varepsilon}T^{2\sigma-3/2+1/(5-4\sigma)+\varepsilon}).$$
Collecting all the results obtained in this section, we have
\begin{align*}
I_1-I_2+I_3=\Sigma_1(2T,2Y)-\Sigma_1(T,Y)-I_{12}+O\left({M^{1-2\sigma+\varepsilon}\left(T^{1/4-\sigma}+T^{2\sigma-3/2+1/(5-4\sigma)+\varepsilon}\right)}\right).
\end{align*}

\section{Evaluation of $I_4$}
Now the only task remaining to us is to evaluate $I_4$. We apply the last several lines of the statement of Lemma \ref{lem-2} to the inner integral of the right-hand side of $S_{41}$ and $S_{42}$:
\begin{align*}
-S_{41}+S_{42}&\ll \frac{|a(k)\bar{a(l)}|}{[k,l]^{2\sigma}}t^{1-2\sigma}\int_x^{\infty}\frac{|\Delta_{2\sigma-1}(x,\frac{\bar{\kappa}}{\lambda})|}{x}\left(\frac{\kappa\lambda}{x}\right)^{1/2}dx\\
&=\frac{|a(k)\bar{a(l)}|}{[k,l]^{2\sigma}}(\lambda\kappa)^{1/2}t^{1-2\sigma}\sum_{j=1}^{\infty}\int_{2^{j-1}x}^{2^jx}x^{-3/2}|\Delta_{2\sigma-1}(x,\frac{\bar{\kappa}}{\lambda})|dx.
\end{align*}
By the Cauchy-Schwarz inequality, each integral on the right side is
$$\ll \left(\int_{2^{j-1}x}^{2^jx}x^{-3}dx\right)^{1/2}\left(\int_{2^{j-1}x}^{2^jx}\left|\Delta_{2\sigma-1}(x,\frac{\bar{\kappa}}{\lambda})\right|^2dx\right)^{1/2}.$$
\cite{K} gives for $1\le k \le u$, $-1/2<a\le 0$,
\begin{equation*}\begin{aligned}
\int_{u/2}^{u}\left|\Delta_{2\sigma-1}(x,\frac{\bar{\kappa}}{\lambda})\right|^2dx\ll &\lambda\left(u^{1/2+2\sigma}-\left(\frac{u}{2}\right)^{1/2+2\sigma}\right)\\
&+O(\lambda^2u^{1+\varepsilon})+O(\lambda^{3/2}u^{3/4+\sigma+\varepsilon}).
\end{aligned}\end{equation*}
And since $T\sim \frac{X}{\kappa\lambda}$ we obtain
\begin{align*}\begin{aligned}
-S_{41}+S_{42}&\ll \frac{|a(k)\bar{a(l)}|}{[k,l]^{2\sigma}}t^{1-2\sigma}(\lambda\kappa)^{1/2}\sum_{j=1}^{\infty}(2^jX)^{-1}\\
&\times\left\{\lambda^{1/2}(2^jX)^{1/4+\sigma}+\lambda(2^jX)^{1/2+\varepsilon}+\lambda^{3/4}(2^jX)^{3/8+\sigma/2+\varepsilon}\right\}\\
&\ll \frac{|a(k)\bar{a(l)}|}{[k,l]^{2\sigma}}\left(\kappa^{5/4-\sigma}\lambda^{7/4-\sigma}T^{1/4-\sigma}+\kappa^{\varepsilon}\lambda^{1/2+\varepsilon}T^{1/2-2\sigma\varepsilon}+\right.\\
&\left.\kappa^{9/8-\sigma/2}\lambda^{15/8-\sigma/2+\varepsilon}T^{3/8-3\sigma/2+\varepsilon}\right),
\end{aligned}\end{align*}
and the same estimate also holds for $S_{43}-S_{44}+S_{45}$.
\section{Application of Atkinson's Third Lemma}
To evaluate the term $I_4$ further, we must substitute Roy's summation formula (Lemma \ref{lem-1}) to appear in the $J_3$.
By using the usual asymptotic expansions of Bessel functions, we can rewrite \eqref{eq:d} to the following:
\begin{equation}\begin{aligned}
&\Delta_{2\sigma-1}\left(x;\frac{\kappa}{\lambda}\right)=\frac{\lambda^{1/2}}{\sqrt{2}\pi}x^{\sigma-1/4}\sum_{n>0}\frac{\sigma_{2\sigma-1}(n)}{n^{1/4+\sigma}}e\left(-\frac{\kappa n}{\lambda}\right)\\
&\times\left(\cos\left(\frac{4\pi\sqrt{nx}}{\lambda}-\frac{\pi}{4}\right)-\frac{((16\sigma^2-1)-1)\lambda}{32\pi\sqrt{nx}}\sin\left(\frac{4\pi\sqrt{nx}}{\lambda}-\frac{\pi}{4}\right)\right)+O(x^{\sigma-5/4}).
\label{eq:d_bessel}
\end{aligned}\end{equation}
The infinite series in Voronoi's formula converges boundedly in (any fixed) finite intervals only, so Atkinson's term-wise integration process is not valid as it is. Of course, our case meets the same situation. Therefore, we can do something similar to Matsumoto's \cite{M} ``finite cut" form of $J_3$:
\begin{align*}\begin{aligned}
&\lim_{b\to \infty}\int_X^{b^2}\Delta_{2\sigma-1}\left(x,\frac{\bar{\kappa}}{\lambda}\right)T^{3/2-\sigma}x^{-\sigma-1}\arcsinh^{-1}\left(\sqrt{\frac{\pi x}{2\kappa\lambda T}}\right)\\
&\times\left(\frac{T\kappa\lambda}{2\pi x}+1/4\right)^{-1/4}\left(\frac{\kappa\lambda}{2\pi}\right)^{\sigma-1}\left(\sqrt{\frac{\pi x}{2\kappa\lambda T}}\right)\left(\sqrt{\frac{T\kappa\lambda}{2\pi x}+1/4}+1/2\right)^{-1}\\
&\times\exp{(i\left(f(T,x)-\pi x+\pi/2\right)}+O\left(T^{\frac{4\sigma-3}{2(5+4\sigma)}+1-2\sigma}\right),
\end{aligned}\end{align*}
let$$\phi_a(T,x)=x^{-a}\arcsinh^{-1}\left(x\sqrt{\frac{\pi}{2T}}\right)\left(\sqrt{\frac{T}{2\pi x^2}+1/4}+1/2\right)^{-1}\left(\frac{T}{2\pi x^2}+1/4\right)^{-1/4}.$$

In view of Lebesgue's bounded convergence theorem, we can do the term-wise integration. Hence, changing the variable $x$ to $x^2$, and substituting the formula \eqref{eq:d_bessel} into the right side of \eqref{eq:j3}. We obtain
\begin{align*}
J_3(k,l)=&J_{31}(k,l)-J_{32}(k,l)+J_{33}\\
&+O\left(\frac{|a(k)\overline{a(l)}|}{[k,l]^{2\sigma}}\left(\kappa^{5/4-\sigma}\lambda^{7/4-\sigma}T^{1/4-\sigma}+\kappa^{\varepsilon}\lambda^{1/2+\varepsilon}T^{1/2-2\sigma+\varepsilon}\right.\right.\\
&+\left.\left.\kappa^{9/8-\sigma/2}\lambda^{15/8-\sigma/2+\varepsilon}T^{3/8-3\sigma/2+\varepsilon}\right)\right),
\end{align*}
where 
\begin{align*}
J_{31}(k,l)=&\Re\left\{\frac{|a(k)\overline{a(l)}|}{[k,l]^{2\sigma}}\frac{T^{\frac{3}{2}-\sigma}}{2^{\sigma-1/2}\pi^{\sigma+1/2}}\lambda^{\frac{1}{4}+\sigma}\kappa^{\sigma-\frac{1}{4}}\right.\\
&\times \lim_{b\to \infty}\int_{\sqrt{X/\kappa\lambda}}^{b}\sum_{n=1}^{\infty }\frac{\sigma_{2\sigma-1}}{n^{1/4+\sigma}}\cos\left(4\pi x\sqrt{\frac{\kappa n}{\lambda}}-\pi/4\right)\left.\phi_{3/2}(T,x^2)\exp{(i\left(f(T,x^2)-\pi x^2+\frac{\pi}{2}\right)}dx\right\},
\end{align*}
\begin{align*}
J_{32}(k,l)=&\Re\left\{\frac{|a(k)\overline{a(l)}|}{[k,l]^{2\sigma}}\frac{T^{\frac{3}{2}-\sigma}}{2^{\sigma-1/2}\pi^{\sigma+1/2}}\lambda^{\frac{3}{4}+\sigma}\kappa^{\sigma-\frac{3}{4}}\frac{(16\sigma^2-1)}{32\pi n^{1/2}}\right.\\
&\times\lim_{b\to \infty}\int_{\sqrt{X/\kappa\lambda}}^{b}\sum_{n=1}^{\infty }\frac{\sigma_{2\sigma-1}}{n^{1/4+\sigma}}\sin\left(4\pi x\sqrt{\frac{\kappa n}{\lambda}}-\pi/4\right)\left.\phi_{5/2}(T,x^2)\exp{(i\left(f(T,x^2)-\pi x^2+\frac{\pi}{2}\right)}dx\right\},
\end{align*}
\begin{align*}
J_{33}(k,l)=O\left(\frac{|a(k)\overline{a(l)}|}{[k,l]^{2\sigma}}T^{3/2-\sigma}\int_X^{\infty}x^{\sigma-9/4}|\phi_{2\sigma}(T,\sqrt{x/\kappa\lambda})|dx\right)\\
\ll O\left(\frac{|a(k)\overline{a(l)}|}{[k,l]^{2\sigma}}T^{3/2-\sigma}\int_X^{\infty}x^{\sigma-9/4}x^{-\sigma}dx\right)\ll O(T^{1/4-\sigma}).
\end{align*}
For $J_{31}$, we consider a finite truncation of the integral and apply Atkinson's third lemma.

\begin{lemma}(\cite[Lemma 3]{A})
Let A, B be any two fixed constants, $0<A<B$, and we assume $AT^{1/2}<a<BT^{1/2}$. Then,
\begin{equation*}\begin{aligned}
\int_a^b& \phi_{\alpha}(T,x)exp\left\{i\left(\pm4\pi x \sqrt{n}-2T \cdot \arcsinh(x\sqrt{\pi/2T})-\sqrt{2\pi x^2T+\pi^2x^4}+\pi x^2\right)\right\}dx.\\
&=4\pi \delta T^{-1}n^{(\alpha-1)/2}\log^{-1}(T/2\pi n)((T/2\pi)-n)^{(3/2)-\alpha}\exp(i(T-T\cdot \log(T/3\pi n)-2\pi n+\pi/4))\\
&+O(\delta n^{(\alpha-1)/2}((T/2\pi)-n)^{1-\alpha}T^{-3/2})\\
&+O(T^{-\alpha/2}\min(1,|a-(a^2+2T/\pi)^{1/2}\pm2\sqrt{n}|^{-1}))\\
&+O(b^{-a}(n^{1/2}+O(T/b))^{-1})+O(e^{-CT-C\sqrt{nT}}),
\end{aligned}\end{equation*}
with a large positive constant $C$, where $\delta=1$ if $n\le T/2\pi,\  na^2\le ((T/2\pi)-n)^2\le nb^2$ and the double sign takes the sign $+$, and in other case $\delta=0$.

\label{lem-4}
\end{lemma}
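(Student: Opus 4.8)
The plan is to read Lemma~\ref{lem-4} off Atkinson's stationary-phase estimate, Lemma~\ref{lem-9}, following \cite{A} but recording the $\sigma$-dependent specifics. Write the exponent of the integrand in the shape $2\pi i(f(x)+kx)$ by setting
\[
k=\pm 2\sqrt n,\qquad 2\pi f(x)=-2T\arcsinh\!\Bigl(x\sqrt{\tfrac{\pi}{2T}}\Bigr)-\sqrt{2\pi x^2T+\pi^2x^4}+\pi x^2,
\]
and take $\phi(x)=\phi_{\alpha}(T,x)$. Differentiating and simplifying gives the clean forms $f'(x)=x-\sqrt{2T/\pi+x^2}$ and $f''(x)=1-x(2T/\pi+x^2)^{-1/2}>0$ for $x>0$. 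Hence the stationary equation $f'(x)+k=0$ reads $\sqrt{2T/\pi+x^2}=x\mp 2\sqrt n$, which has a positive solution only for the upper sign and only when $n<T/2\pi$, in which case the solution is $x_0=(T/2\pi-n)/\sqrt n$; the requirement $a\le x_0\le b$ is exactly $na^2\le(T/2\pi-n)^2\le nb^2$. These are precisely the conditions defining $\delta=1$, and when $\delta=0$ Lemma~\ref{lem-9} instructs us to delete the terms involving $x_0$, which explains the disappearance of the main term and of the $O(\delta\cdots)$ error term.

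\textbf{The main term.} Writing $M=T/2\pi$ one verifies the pleasant closed forms at the stationary point,
\[
\arcsinh\!\Bigl(x_0\sqrt{\tfrac{\pi}{2T}}\Bigr)=\tfrac12\log\tfrac Mn,\quad f''(x_0)=\frac{2n}{M+n},\quad x_0^2=\frac{(M-n)^2}{n},\quad \frac{T}{2\pi x_0^2}+\tfrac14=\frac{(M+n)^2}{4(M-n)^2},
\]
so that in $\phi_{\alpha}(T,x_0)\,(f''(x_0))^{-1/2}$ the factors $(M+n)^{1/2}$ cancel and one is left with $4\pi T^{-1}n^{(\alpha-1)/2}\log^{-1}(T/2\pi n)\bigl((T/2\pi)-n\bigr)^{3/2-\alpha}$, while $2\pi(f(x_0)+kx_0)$ collapses to $T-T\log(T/2\pi n)-2\pi n$. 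Combining with the rotation factor $e^{\pi i/2}$ of Lemma~\ref{lem-9} and the constant $e^{-\pi i/4}$ built into the normalisation of the integrand (cf.\ the $-\pi/4$ in the definition of $f(T,u)$) produces the displayed main term, with phase $T-T\log(T/2\pi n)-2\pi n+\pi/4$. The term $O(\Phi_0\mu_0 F_0^{-3/2})$ of Lemma~\ref{lem-9}, evaluated with $\mu_0\asymp x_0$, $F_0\asymp T$ and $\Phi_0\asymp|\phi_{\alpha}(T,x_0)|$, gives $O\bigl(\delta\,n^{(\alpha-1)/2}((T/2\pi)-n)^{1-\alpha}T^{-3/2}\bigr)$.

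\textbf{Hypotheses and remaining errors.} To invoke Lemma~\ref{lem-9} I would take $\mu(x)=x/2$, $\Phi(x)=|\phi_{\alpha}(T,x)|$, $F(x)\asymp T$: all branch points of $\sqrt{2T/\pi+z^2}$, of $\arcsinh(z\sqrt{\pi/2T})$ and of the $\phi_{\alpha}$-factors lie at $z=\pm i\sqrt{2T/\pi}$ or at $z=0$, so $f$ and $\phi_{\alpha}$ are analytic in $|z-x|\le\mu(x)$, and $f'(z)=O(F\mu^{-1})$, $(f''(z))^{-1}=O(\mu^2F^{-1})$ hold since $f''(x)\asymp T x^{-2}$ for $x\gg T^{1/2}$ while $f''(x)\asymp 1$ near $x\asymp T^{1/2}$. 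Since $|k|\mu(x)\ge c\sqrt n\,x\ge c'\sqrt{nT}$ and $F(x)\gg T$ on $[a,b]$, the integral error term of Lemma~\ref{lem-9} is $O(e^{-CT-C\sqrt{nT}})$ after using the extra factor $e^{-A|k|\mu(x)}$ to make $\int_a^b\Phi\,(dx+|d\mu|)$ harmless and absorbing powers of $T$ into a smaller $C$. At the endpoint $x=a\asymp T^{1/2}$ we have $\Phi_a\asymp T^{-\alpha/2}$, $f''_a\asymp 1$, $|f'_a+k|=|a-(a^2+2T/\pi)^{1/2}\pm2\sqrt n|$, turning the $a$-endpoint term into $O\bigl(T^{-\alpha/2}\min\{1,|a-(a^2+2T/\pi)^{1/2}\pm2\sqrt n|^{-1}\}\bigr)$; at $x=b$, using $b-(b^2+2T/\pi)^{1/2}=-T/(\pi b)+O(T^2b^{-3})$ and $(f''_b)^{1/2}\asymp T^{1/2}b^{-1}$, the $b$-endpoint term becomes $O\bigl(b^{-\alpha}(n^{1/2}+O(T/b))^{-1}\bigr)$.

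\textbf{Main obstacle.} The delicate part is the bookkeeping of constants: tracking the various $\pm\pi/4$, $\pm\pi/2$ phases through the sign choices so that exactly $+\pi/4$ survives in the main term, and confirming that the powers of $(T/2\pi-n)$ and of $n$ contributed jointly by $\phi_{\alpha}(T,x_0)$ and $(f''(x_0))^{-1/2}$ combine to the clean exponents $(\alpha-1)/2$ and $3/2-\alpha$ with no stray factor (the identity $4Mn+(M-n)^2=(M+n)^2$ is the small miracle that makes everything collapse). A secondary point is keeping $\mu(x)=x/2$ compatible with $x\ge a\asymp T^{1/2}$ uniformly; observe that $\delta=1$ forces $(T/2\pi-n)^2\ge na^2\gg nT$, so $x_0\gg\sqrt T$, and in particular the degenerate regime $n\to T/2\pi$ (where $x_0\to 0$ and $f''(x_0)\to 0$) never occurs under the hypotheses of the lemma, so no degenerate-saddle analysis is needed.
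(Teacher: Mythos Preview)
The paper does not give its own proof of this lemma: it is quoted as \cite[Lemma~3]{A} and used as a black box. Your proposal reproduces Atkinson's original argument, applying the saddle-point estimate (Lemma~\ref{lem-9}) with $k=\pm 2\sqrt{n}$, $\mu(x)=x/2$, $F\asymp T$; the key computations you record --- $f'(x)=x-\sqrt{2T/\pi+x^2}$, the saddle $x_0=(T/2\pi-n)/\sqrt{n}$, $f''(x_0)=2n/(M+n)$, and the cancellation of $(M+n)^{1/2}$ via $(M+n)^2=4Mn+(M-n)^2$ --- are correct and are exactly how Atkinson proceeds.

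One small remark on the phase bookkeeping: the integrand as the paper states the lemma carries no explicit $-\pi/4$, so the phase coming out of Lemma~\ref{lem-9} alone is $T-T\log(T/2\pi n)-2\pi n+\pi/2$; the $+\pi/4$ in the displayed main term (and the ``$3\pi n$'' and ``$b^{-a}$'') are transcription slips in the paper's restatement of Atkinson's lemma rather than something you need to manufacture from the normalisation of $f(T,u)$. Apart from that, your sketch is sound and matches the source.
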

Now we substitute the result of Lemma \ref{lem-4} to the right-hand side of $J_{31}$, and obtain
$$J_{31}=M_1(k,l)+M_2(k,l)+M_3(k,l)+M_4{k,l}+M_5(k,l),$$
where
$$M_1(k,l)=\frac{T^{3/2-\sigma}}{\pi^{1/2+\sigma}2^{\sigma-1/2}}\Re\left\{\frac{a(k)\overline{a(l)}}{[k,l]^{2\sigma}}\kappa^{\sigma-1/4}\lambda^{\sigma+1/4}\sum_{n\le Z}\frac{\sigma_{2\sigma-1}(n)e\left(\frac{-\kappa n}{\lambda}\right)}{n^{\sigma+1/4}}\right.$$
$$\left.\times\left(\frac{\kappa n}{\lambda}\right)^{1/4}\frac{4\pi \exp(i\cdot g(T,\kappa n/\lambda))}{T\log(\lambda T/2\pi\kappa n)}\right\},$$
with $Z=(\lambda/\kappa)\xi(T,X/\kappa\lambda)=(\lambda/\kappa)\xi(T,Y)$,
\begin{equation}\begin{aligned}
&M_2(k,l)\ll T^{-\sigma}\frac{|a(k)\overline{a(l)}|}{[k,l]^{2\sigma}}\kappa^{\sigma-1/4}\lambda^{\sigma+1/4}\sum_{n\le Z}\frac{\sigma_{2\sigma-1}(n)e\left(\frac{-\kappa n}{\lambda}\right)}{n^{\sigma+1/4}}\left(\frac{\kappa n}{\lambda}\right)^{1/4}(T/2\pi-n)^{-1/2},\\
&M_3(k,l)\ll T^{3/4-\sigma}\frac{|a(k)\overline{a(l)}|}{[k,l]^{2\sigma}}\kappa^{\sigma-1/4}\lambda^{\sigma+1/4}\sum_{n=1}^{\infty}\frac{\sigma_{2\sigma-1}(n)e\left(\frac{-\kappa n}{\lambda}\right)}{n^{\sigma+1/4}}\min\left(1,\frac{1}{|2\sqrt{\kappa n/\lambda}-2\sqrt{\xi(T,Y)}|}\right),\label{w3}
\end{aligned}\end{equation}
and $$M_4\ll T^{3/2-\sigma}e^{-CT}\frac{|a(k)\overline{a(l)}|}{[k,l]^{2\sigma}}\kappa^{\sigma-1/4}\lambda^{\sigma+1/4}\sum_{n=1}^{\infty}\frac{\sigma_{2\sigma-1}(n)e\left(\frac{-\kappa n}{\lambda}\right)}{n^{\sigma+1/4}}e^{-C\sqrt{nT}}, $$
$$M_5(k,l)\ll b^{-3/2}\sum_{n=1}^{\infty}\frac{\sigma_{2\sigma-1}(n)e\left(\frac{-\kappa n}{\lambda}\right)}{n^{\sigma+1/4}}\frac{1}{n^{1/2}+O(T/b)}.$$
Here $M_5$ is convergent, and it tends to 0 when $b$ tends to $\infty$.
In $J_{32}$, it is not necessary to consider the finite truncation of the integral, because the infinite series is absolutely convergent, so we can write $J_{32}$ as 
$$M_6(k,l)+M_7(k,l)+M_8(k,l)+M_9(k,l),$$
where
\begin{equation*}\begin{aligned}M_6(k,l)=\frac{T^{3/2-\sigma}(16\sigma^2-1)}{\pi^{3/2+\sigma}2^{\sigma+9/2}}\Re\left\{\frac{a(k)\overline{a(l)}}{[k,l]^{2\sigma}}\kappa^{\sigma-3/4}\lambda^{\sigma+3/4}\sum_{n\le Z}\frac{\sigma_{2\sigma-1}(n)e\left(\frac{-\kappa n}{\lambda}\right)}{n^{\sigma+1/4}}\right.\\
\left.\times\left(\frac{\kappa n}{\lambda}\right)^{3/4}\frac{4\pi \exp(ig(T,\kappa n/\lambda))}{T\log(\lambda T/2\pi\kappa n)(T/2\pi-\kappa n/\lambda)}\right\}
\end{aligned}\end{equation*}
and $M_7(k,l),\ M_8(k,l),\ M_9(k,l)$ are similar to $M_2$, $M_3$, $M_4$ just replacing the factor $\kappa^{\sigma-1/4}\lambda^{\sigma+1/4}$ by $\kappa^{\sigma-3/4}\lambda^{\sigma+3/4}$, and the factor $n^{\sigma+1/4}$ by $n^{\sigma+3/4}$. In $M_7$ replace the factor $(T/2\pi-\kappa n/\lambda)^{1/2}$ by $(T/2\pi-\kappa n/\lambda)^{3/2}$ and in $M_8$ replace the factor $T^{3/4-\sigma}$ by $T^{1/4-\sigma}$.
We can obtain $$M_2,M_4,M_6,,M_7,M_9\ll \frac{|a(k)\overline{a(l)}|}{[k,l]^{2\sigma}}\lambda^{1/2+\sigma}T^{-\sigma}\log(T),$$
$$M_8\ll\frac{|a(k)\overline{a(l)}|}{[k,l]^{2\sigma}}T^{1/4-\sigma}.$$
For $M_3$, we can show that
$$M_3\ll \frac{a(k)\overline{a(l)}}{[k,l]^{2\sigma}} T^{1-2\sigma}\log(T)$$
holds. To prove this, we cut the right side of \eqref{w3} by 
$$\sum_{n\le Z/2}+\sum_{Z/2<n\le Z-\sqrt{Z}}+\sum_{Z-\sqrt{Z}<n\le Z+\sqrt{Z}}+\sum_{Z+\sqrt{Z}<n\le 2Z}+\sum_{n> 2Z}$$
and evaluate each sum in a way similar to that of Atkinson\cite{A}, the details being omitted.

Collecting the above all, we have 
\begin{equation*}\begin{aligned}
J_3(k,l)=M_1(k,l)+O_M\left(\frac{|a(k)\overline{a(l)}|}{[k,l]^{2\sigma}}\left\{T^{1-2\sigma}\log(T)+T^{1/4-\sigma}\right\}\right).
\end{aligned}\end{equation*}
We can show a similar result for $J_1(k,l)$.
Since $$\sum_{k\le M}\sum_{l\le M}M_1(k,l)=-\Sigma_2(T,\xi(T,Y)),$$
we obtain
\begin{equation*}\begin{aligned}
I_4=&-\Sigma_2(2T,\xi(2T,2Y))+\Sigma_2(T,\xi(T,Y)\\
&+\sum_{k\le M}\sum_{l\le M}J_2(k,l)+O_M(T^{1-2\sigma}\log(T))+O_M(T^{1/4-\sigma}).
\end{aligned}\end{equation*}

\section{On $I_{12}$ and $J_2(k,l)$}
Summing up all the results we got, we obtain
\begin{equation*}\begin{aligned}
\int_T^{2T}&\left|\zeta(\sigma+it)A(\sigma+it)\right|^2dt=\mathcal{M}(2T,A)+\Sigma_1(2T,2Y)+\Sigma_2(2T,\xi(2T,2Y))\\
&-\mathcal{M}(T,A)-\Sigma_1(T,Y)-\Sigma_2(T,\xi(T,Y))\\
&-I_{12}-\sum_{k\le M}\sum_{l\le M}J_2(k,l)+O_M(T^{1-2\sigma}\log(T))+O_M(T^{1/4-\sigma}).
\end{aligned}\end{equation*}
Hence our task is to consider $I_{12}$ and $\sum\sum J_2(k,l)$. 
Next, we use the method in \cite{I3} to verify the cancellation process.

Let
$$S_{13}=2(2\pi)^{2\sigma-1}\Im\left\{e^{-2\sigma\pi i}\frac{a(k)\overline{a(l)}}{[k,l]^{2\sigma}}\sigma_{2\sigma-1}(n)e\left(\frac{\bar{\kappa}n}{\lambda}\right)t^{1-2\sigma}\int_0^{\infty}\frac{e(\frac{n}{\kappa\lambda}y)e^{-it\log\frac{1+y}{y}}}{y^{1-\sigma}(1+y)^{1-\sigma}\log\frac{1+y}{y}}dy\right\}$$
and we can rewrite $S_1(2T;n,k,l)$ as 
$$S_{2T;n,k,l}=S_{11}(2T;n,k,l)-S_{13}(2T;n,k,l)+S_{13}(2T;n,k,l)+S_{12}(2T;n,k,l).$$
We denote the last two terms on the right-hand side of the above equation by $R_1$. For the first two terms, we have
\begin{align*}
    &\int_0^{\infty}\frac{e(\frac{n}{\kappa\lambda}y)}{y^{1-\sigma}(1+y)^{1-\sigma}\log\frac{1+y}{y}}(e^{i2T\log\frac{1+y}{y}}-e^{-i2T\log\frac{1+y}{y}})dy\\
    &=\int_{0}^{\infty}\frac{e(\frac{n}{\kappa\lambda}y)}{y^{1-2\sigma}(1+y)}\int_{\sigma-2iT}^{\sigma+2iT}\left(\frac{1+y}{y}\right)^{u}dudy\\
    &=\int_{\sigma-2iT}^{\sigma+2iT}h\left(u,\frac{n}{\kappa\lambda}\right),
\end{align*}
since the $h(u,n/\kappa\lambda)$ is uniformly convergent in $\Re(u)<2\sigma$. So,
\begin{align*}
    I_{12}&=2(2\pi)^{2\sigma-1}\sum_{k\le M}\sum_{l\le M}\Im\left\{e^{-2\pi i \sigma}(2T)^{1-2\sigma}\int_{\sigma-2iT}^{\sigma+2iT}\frac{a(k)\overline{a(l)}}{[k,l]^{2\sigma}}\sum_{X<n<2X}\sigma_{2\sigma-1}e^{2\pi i \Bar{\kappa} n/\lambda}h\left(u,\frac{n}{\kappa\lambda}\right)du\right\}+R_1.
\end{align*}
Then we can express the inner sum as what we do in \eqref{eq:deh}, and we apply \eqref{eq:D}, obtain
\begin{align*}
    \sum_{X<n<2X}\sigma_{2\sigma-1}e^{2\pi i \Bar{\kappa} n/\lambda}h\left(u,\frac{n}{\kappa\lambda}\right)&=\int_X^{2X}h\left(u,\frac{n}{\kappa\lambda}\right)\left(\frac{\zeta(2-2\sigma)}{\lambda^{2-2\sigma}}+\frac{\zeta(2\sigma)}{\lambda^{2\sigma}}X^{2\sigma-1}\right)dx\\
    &+h\left(u,\frac{n}{\kappa\lambda}\right)\Delta_{2\sigma-1}\left.\left(x,\frac{\Bar{\kappa}}{\lambda}\right)\right|_x^{2X}-\int_X^{2X}\frac{\partial{h\left(u,\frac{x}{\kappa\lambda}\right)}}{\partial{x}}\Delta_{2\sigma-1}\left(x,\frac{\bar{\kappa}}{\lambda}\right)dx.
\end{align*}
Denoting the first term of the right side by $R_2$ and the second term of the right side by $R_3$, we can rewrite $I_{12}$ as
\begin{equation*}\begin{aligned}
I_{12}&=-2(2\pi)^{2\sigma-1}\sum_{k\le M}\sum_{l\le M}\Im\left\{e^{-2\pi i \sigma}(2T)^{1-2\sigma}\int_{\sigma-2iT}^{\sigma+2iT}\frac{a(k)\overline{a(l)}}{[k,l]^{2\sigma}}\int_X^{2X}\frac{\partial{h\left(u,\frac{x}{\kappa\lambda}\right)}}{\partial{x}}\Delta_{2\sigma-1}\left(x,\frac{\bar{\kappa}}{\lambda}\right)dxdu\right\}\\
&+R_1+R_2+R_3.
\end{aligned}\end{equation*}
Changing the integrations on the right side and using
\begin{align*}
    \int_{\sigma-i2T}^{\sigma+2iT}&\frac{\partial{h\left(u,\frac{x}{\kappa\lambda}\right)}}{\partial{x}}du=\frac{1}{x}\int_{0}^{\infty}\frac{e^{2\pi ixy/\kappa\lambda}e^{it\log((1+y)/y)}}{y^{1-\sigma}(1+y)^{2-\sigma}\log((1+y)/y)}\\
    &\times\left.\left(it-\sigma+1-\frac{1}{\log((1+y)/y)}+(1-2\sigma)(1+y)\right)\right|_{t=-2T}^{2T}dy.
\end{align*}
Here, let's focus on the
\begin{align*}
    e^{it\log((1+y)/y)}&\left.\left(it-\sigma+1-\frac{1}{\log((1+y)/y)}+(1-2\sigma)(1+y)\right)\right|_{t=-2T}^{2T}\\
   & =2iTe^{2iT\log((1+y)/y)}+2iTe^{-2iT\log((1+y)/y)}\\
    &+e^{it\log((1+y)/y)}\left.\left(-\sigma+1-\frac{1}{\log((1+y)/y)}+(1-2\sigma)(1+y)\right)\right|_{t=-2T}^{2T}
\end{align*}
and denote the contribution of the second term and the third term on the right side by $R_4$ and $R_5$. We obtain
\begin{equation*}
    I_{12}=-\sum_{k\le M}\sum_{l\le M}J_2(k,l)+\sum_{i=1}^5R_i.
\end{equation*}
 Next, we estimate every $R_i$. Basically, the estimates of $R_1$, $R_3$, $R_4$ and $R_5$ are similar to the valuation we have discussed before, that is $R_3$ can be treated similarly to $I_2$, and $R_4$ similarly to $S_{43}$, $R_5$ similarly to $-S_{41}+S_{42}$ and $-S_{44}+S_{45}$. We obtain
 $$R_1+ R_3+R_4+R_5\ll_{M}O(T^{2\sigma-3/2+1/(5-4\sigma)+\varepsilon}+T^{1/4-\sigma}+T^{3/8-3\sigma/2}).$$
 About $R_2$, we can change the integration and applying Lemma \ref{lem-2}, then we obtain
 \begin{equation*}\begin{aligned}
 &R_2=R_{21}+R_{22}\\
 &=2\left(\frac{\pi}{T}\right)^{2\sigma-1}\sum_{k\le M}\sum_{l\le M}\Im\left\{e^{-2\pi i\sigma}\frac{a(k)\overline{a(l)}}{[k,l]^{2\sigma}}\right.\left.\int_X^{2X}Q_1(x;\lambda)Q_2(x;\kappa,\lambda)e^{iF(x;\kappa,\lambda)}dx\right\}\\
 &+2\left(\frac{\pi}{T}\right)^{2\sigma-1}\sum_{k\le M}\sum_{l\le M}\Im\left\{e^{-2\pi i\sigma}\frac{a(k)\overline{a(l)}}{[k,l]^{2\sigma}}\int_X^{2X}Q_1(x;\lambda)R(2T,x/\kappa\lambda)dx\right\},
\end{aligned}\end{equation*}
where
\begin{align*}
    &F(x;\kappa\lambda)=2f(T,X/2\kappa\lambda)-\frac{\pi x}{\kappa\lambda}+\frac{3\pi}{4},\\
   & Q_1(x;\lambda)=\frac{\zeta(2-2\sigma)}{\lambda^{2-2\sigma}}+\frac{\zeta(2\sigma)}{\lambda^{2\sigma}}X^{2\sigma-1},\\
   & Q_2(x;\kappa,\lambda)=T^{\sigma-1/2}\left(\frac{x}{\kappa\lambda}\right)^{1/2-\sigma}\left(\arcsinh\sqrt{\frac{\pi x}{4T\kappa\lambda}}\right)^{-1}\left(\frac{2Tx}{\pi\kappa\lambda}+\frac{x^2}{4\kappa^2\lambda^2}\right)^{-1/4}.
\end{align*}
Thus we have 
$$R_{22}\ll \sum_{k\ll M}\sum_{l\ll M}\frac{|a(k)\overline{a(l)}|}{[k,l]^{2\sigma}}T^{\sigma-3/2}\kappa^{\sigma}(\lambda^{3\sigma-2}X^{1-\sigma}+\lambda^{-\sigma}X^{\sigma}).$$

\begin{lemma}(Heath-Brown \cite{HB}).

Let $F(x)$, $G_j(x)$, $1
\le j\le J$, be continuous functions defined on the interval [a,b], which are monotonic. Assume $F'(x)$ is also monotonic, $|F'(x)|\ge C_0^{-1}$ and $|G_j(x)\le C_j|$, $1
\le j\le J$, on [a,b]. Then,
\begin{equation*}\begin{aligned}
\left|\int_a^bG_1(x)\cdots G_J(x)e^{iF(x)}dx\right|\le 2^{J+3}\prod_{j=0}^J C_j.
\end{aligned}\end{equation*}
\label{lem-5}
\end{lemma}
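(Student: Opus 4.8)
The plan is to reduce the whole estimate to a single Riemann--Stieltjes integration by parts against $e^{iF}$, after which only two total‑variation bounds remain. First I would assemble the relevant bounded‑variation facts. Since $F$ is differentiable with $F'$ monotone and $|F'|\ge C_0^{-1}>0$ on $[a,b]$, the derivative $F'$ has constant sign; hence $1/F'$ is monotone with $|1/F'|\le C_0$, so $\operatorname{Var}_{[a,b]}(1/F')=|1/F'(b)-1/F'(a)|\le C_0$. Likewise each $G_j$ is monotone with $|G_j|\le C_j$, so $\operatorname{Var}_{[a,b]}(G_j)=|G_j(b)-G_j(a)|\le 2C_j$; applying the product rule $\operatorname{Var}(fg)\le\|f\|_{\infty}\operatorname{Var}(g)+\|g\|_{\infty}\operatorname{Var}(f)$ repeatedly shows that $G:=G_1\cdots G_J$ has bounded variation with $\|G\|_{\infty}\le\prod_{j=1}^{J}C_j$ and
$$\operatorname{Var}_{[a,b]}(G)\le\sum_{j=1}^{J}\operatorname{Var}(G_j)\prod_{i\ne j}\|G_i\|_{\infty}\le 2J\prod_{j=1}^{J}C_j.$$

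Next, since $F$ is differentiable with $F'$ nonvanishing, $d\bigl(e^{iF}\bigr)=iF'e^{iF}\,dx$, so that
$$\int_a^b G_1\cdots G_J\,e^{iF}\,dx=\int_a^b\frac{G(x)}{iF'(x)}\,d\bigl(e^{iF(x)}\bigr).$$
Because $e^{iF}$ is continuous and $G/(iF')$ is of bounded variation, the Riemann--Stieltjes integration by parts is valid and gives
$$\int_a^b\frac{G}{iF'}\,d\bigl(e^{iF}\bigr)=\left[\frac{G}{iF'}\,e^{iF}\right]_a^b-\int_a^b e^{iF}\,d\!\left(\frac{G}{iF'}\right).$$
The boundary term is at most $|G(a)|/|F'(a)|+|G(b)|/|F'(b)|\le 2C_0\|G\|_{\infty}$ in modulus, while
$$\left|\int_a^b e^{iF}\,d\!\left(\frac{G}{iF'}\right)\right|\le\operatorname{Var}_{[a,b]}\!\left(\frac{G}{F'}\right)\le\|G\|_{\infty}\operatorname{Var}\!\left(\frac{1}{F'}\right)+\left\|\frac{1}{F'}\right\|_{\infty}\operatorname{Var}(G)\le(2J+1)\,C_0\prod_{j=1}^{J}C_j.$$
Adding the two contributions,
$$\left|\int_a^b G_1\cdots G_J\,e^{iF}\,dx\right|\le(2J+3)\,C_0\prod_{j=1}^{J}C_j\le 2^{J+3}\prod_{j=0}^{J}C_j,$$
the last step because $2J+3\le 8\cdot 2^{J}=2^{J+3}$ for every integer $J\ge 0$.

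The only point demanding care is the justification of the Stieltjes manipulations: one checks that $F'$, being monotone on a compact interval, is bounded (so $e^{iF}$ has finite variation), and that $1/F'$, and hence $G/(iF')$, is of bounded variation, which is exactly what licenses the integration‑by‑parts identity; everything else is mechanical. If one wished to avoid Stieltjes integrals, an alternative is to induct on $J$, peeling off one monotone factor at a time with the second mean value theorem and invoking the bound already obtained for $J-1$ factors on subintervals of $[a,b]$, but then the constants need to be followed slightly more carefully to land exactly at $2^{J+3}$.
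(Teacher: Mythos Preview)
The paper does not prove this lemma at all; it is simply quoted from Heath--Brown \cite{HB} and used as a black box in the estimation of $R_{21}$. So there is no ``paper's own proof'' to compare against.

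Your argument is correct. The reduction to a single Riemann--Stieltjes integration by parts against $e^{iF}$, followed by the product total--variation bound, is exactly the clean way to do this, and your bookkeeping of the constants is sound: the key observations that $1/F'$ has constant sign (hence variation $\le C_0$) and that $\operatorname{Var}(G)\le 2J\prod_j C_j$ are both right. In fact your proof yields the sharper constant $2J+3$ rather than $2^{J+3}$, which is worth noting. The alternative inductive route you sketch---peeling off one monotone factor at a time via the second mean value theorem---is closer to how such lemmas are often presented in the literature, and is what accounts for the exponential constant $2^{J+3}$ in Heath--Brown's formulation; your direct approach avoids that loss.
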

In this case, we can see that 
$$F'(x;\kappa,\lambda)=\sqrt{\frac{4\pi T}{x\kappa\lambda}+\left(\frac{\pi}{\kappa\lambda}\right)^2}-\frac{\pi}{\kappa\lambda}$$
is positive and decreasing, hence we can take $C_0=F'(2X;\kappa,\lambda)^{-1}$. 

Hence by Lemma \ref{lem-5}, we obtain that the integral in the $R_{21}$ is
$$\ll T^{\sigma-1/2}\left(\frac{X}{\kappa\lambda}\right)^{1/2-\sigma}(\lambda^{2\sigma-2}+\lambda^{-2\sigma}X^{2\sigma-1})\kappa\lambda T^{-1/2}.$$
This implies
$$R_{21}\ll_M T^{-\sigma}(X^{1/2-\sigma}+X^{\sigma-1/2}).$$
Collecting all the above results, we obtain $$R_2\ll_MO(T^{-1/2}+T^{1/2-2\sigma}).$$
So we complete the proof of Theorem \ref{thm1}.
\section{The proof of Theorem \ref{thm2}}
Let $L$ be a positive integer satisfying $2^{-L}T\ge C^{\star}$. Then replacing $T$ by $2^{-j}T$ in Theorem~\ref{thm1} for $1\le j\le L$. Summing them up we obtain
\begin{align*}
   \int_0^{T} |\zeta(\sigma+it)A(\sigma+it)|^2dt=\mathcal{M}(T,A)+\Sigma_1(T,Y)+\Sigma_2(T,\xi(T,Y))\\
-\mathcal{M}(2^{-L}T,A)-\Sigma_1(2^{-L}T,2^{-L}Y)-\Sigma_2(2^{-L}T,\xi(2^{-L}T,2^{-L}Y))\\
+O_M\left(\sum_{j=1}^L(2^{-j}T)^{1-2\sigma}\log(2^{-j}T)\right)+\int_0^{2^{-L}T}|\zeta(\sigma+it)A(\sigma+it)|^2dt.
\end{align*}
The condition $2^{-L}T\ge C^{\star}$ is equivalent to 
\begin{equation}L\ll\left[\frac{\log T-\log C^{\star}}{\log2}\right],\label{eq:L}\end{equation}
Since $T\ge C^{\star}\ge e$, we have $\log\log T\ge 0$, therefore the choice 
$$L=\left[\frac{\log T-\log C^{\star}-\alpha\log\log T}{\log2}\right]$$
satisfies \eqref{eq:L} for any positive $\alpha$. 
Thus
$$\mathcal{M}(2^{-L}T,A)\ll_M (2^{-L}T)^{2-2\sigma}\ll_M \log^{(2-2\sigma)\alpha}(T).$$
Also, since $\arcsinh(x)\asymp x$ for small $x$, we have

\begin{align*}
\Sigma_1(2^{-L}T,2^{-L}Y)\ll_M (2^{-L}T)^{1-\sigma}\sum_{k,l\le M}\sum_{n\le \kappa\lambda Y/2^{L}}\sigma_{2\sigma-1}(n)n^{-\sigma-1/2}\\
\ll_M \log^{(1-\sigma)\alpha}(T)(Y2^{-L})^{1/2-\sigma+\varepsilon}\ll_M \log^{(3/2-2\sigma+\varepsilon)\alpha}(T),
\end{align*}
\begin{align*}
\Sigma_2(2^{-L}T,\xi(2^{-L}T,2^{-L}Y))\ll_M (2^{-L}T)^{1/2-\sigma}\sum_{k,l\le M}\sum_{n\le \kappa\lambda Y/2^{L}}\sigma_{2\sigma-1}(n)n^{-\sigma}\\
\ll_M \log^{(1/2-\sigma)\alpha}(T)\log^{(1-\sigma+\varepsilon)\alpha}(T)\ll_M \log^{(3/2-2\sigma+\varepsilon)\alpha}(T),
\end{align*}
where we use the mean square estimate for $\zeta(\sigma+it)$ in \cite{T}: 
$$\int_0^{2^{-L}T}|\zeta(\sigma+it)A(\sigma+it)|^2dt\ll_M(2^{-L}T)^{2-2\sigma}+(2^{-L}T)\ll_M \log^{(2-2\sigma)\alpha}(T)+\log^{\alpha}(T)$$
and 
$$\sum_{j=1}^L(2^{-j}T)^{1-2\sigma}\log(2^{-j}T)\ll LT^{1-2\sigma} \log T\ll T^{1-2\sigma}\log^2 T.$$
So, we obtain
\begin{align*}
\begin{aligned}
      \int_0^{T} |\zeta(\sigma+it)A(\sigma+it)|^2dt-\mathcal{M}(T,A)-\Sigma_1(T,Y)-\Sigma_2(T,\xi(T,Y))\\
      \ll O_M\left(\log^{(2-2\sigma)\alpha}(T)+\log^{(3/2-2\sigma+\varepsilon)\alpha}(T)+\log^{\alpha}(T)\log T+ T^{1-2\sigma}\log^2 T\right).
\end{aligned}\end{align*}
\\

\noindent\textbf{Acknowledgment.} The author would like to thank Professor Kohji Matsumoto and Professor Henrik Bachmann deeply for their helpful comments and valuable advice.

\end{document}